\documentclass[smallextended,numbook,runningheads]{svjour3}
\usepackage{graphicx,amsmath,amsfonts}
\usepackage{mathptmx}
\usepackage{ifpdf}
\usepackage{color}

\usepackage[ruled,vlined,lined,linesnumbered,commentsnumbered]{algorithm2e}
\setlength{\algomargin}{1.5cm}

\DeclareMathAlphabet{\mathcal}{OMS}{cmsy}{m}{n}

\usepackage{nicefrac} 
\usepackage{url}

\def\FmuL{\mathcal{F}_{\mu,L}}
\def\FzL{\mathcal{F}_{0,L}}
\def\half{{\textstyle\frac{1}{2}}}
\def\Qset{\mathcal{Q}}

\def\PQ{P_{\Qset}}
\def\TV{T_{\tau}}
\def\TTVV{\mathcal{T}}
\def\TTVVtau{\mathcal{T}_{\tau}}
\def\vec{\mathrm{vec}}
\def\rank{\mathop{\rm rank}}

\def\algname{{\bf UPN}}
\def\algnameR{{\bf R}\algname}
\def\algnameBB{{\bf GPBB}}
\def\algnameGP{{\bf GP}}
\def\algnameNesterov{{\bf Nesterov}}
\def\algnameBT{{\bf BT}}
\def\algnameUPNz{{\bf UPN}$_0$}
\def\fig{Fig. }

\def\rmax{M}		
\def\vidx{j}		
\def\vmax{N}		
\def\D1{D_1}		
\def\Dend{D_N}		
\def\Drun{D_\vidx}	

\def\Tover{{\bf T1}}
\def\Tunder{{\bf T2}}

\newcommand{\ie}{{ i.e.}}

\newcommand{\argmin}{\mathop{\rm argmin}}

\newcommand{\Xfun}{\mathcal{X}}
\newcommand{\OC}{\mathcal{O}}

\definecolor{red}{rgb}{1,0,0}
\definecolor{orange}{rgb}{1,0.4,0}
\definecolor{pink}{rgb}{1,0,0.7}

\definecolor{green}{rgb}{0,0.5,0}
\definecolor{cyan}{rgb}{0.125,0.7,0.667}
\definecolor{purple}{rgb}{0.5,0,0.5}



\allowdisplaybreaks

\hyphenation{stra-te-gy}

\begin{document}

\title{Implementation of an Optimal First-Order Method for Strongly Convex
  Total Variation Regularization
\thanks{This work is part of the project
  CSI:\ Computational Science in Imaging, supported by grant no.\ 274-07-0065
  from the Danish Research Council for Technology and Production Sciences.}
}

\author{T. L. Jensen  \and
        J. H. J{\o}rgensen    \and
        P. C. Hansen \and S. H. Jensen
}

\authorrunning{T. L. Jensen, J. H. J{\o}rgensen, P. C. Hansen, and
  S. H. Jensen}

\titlerunning{A First-Order Method for Strongly Convex TV Regularization}

\institute{Tobias Lindstr{\o}m Jensen, S{\o}ren Holdt Jensen \at
           Department of Electronic Systems, Aalborg University, Niels Jernesvej 12,
           9220 Aalborg \O, Denmark. \\
           \email{\{tlj,shj\}@es.aau.dk}
  \and
           Jakob Heide J{\o}rgensen, Per Christian Hansen \at
           Department of Informatics and Mathematical Modelling,
           Technical University of Denmark, Building 321,
           2800 Lyngby, Denmark. \\
           \email{\{jakj,pch\}@imm.dtu.dk}
}

\date{Received: date / Accepted: date}

\maketitle

\begin{abstract}
We present a practical implementation of an optimal first-order method, due to Nesterov, for large-scale total variation
regularization in tomographic reconstruction, image deblurring, etc. The algorithm applies to $\mu$-strongly convex objective functions with $L$-Lipschitz continuous gradient. In the framework of Nesterov both $\mu$ and $L$ are assumed known -- an assumption that is seldom satisfied in practice. We propose 
to incorporate mechanisms to estimate locally sufficient $\mu$ and $L$ during the iterations. The mechanisms also allow for the application to non-strongly convex functions. We discuss the iteration
complexity of several first-order methods, inclu\-ding the proposed algorithm, and
we use a 3D tomography problem to compare the performance of these methods.
The results show that for ill-conditioned problems
solved to high accuracy, the proposed method significantly outperforms
state-of-the-art first-order methods, as also suggested by theoretical results.
\end{abstract}

\keywords{Optimal first-order optimization methods \and strong convexity
\and total variation regularization \and tomography}
\subclass{65K10 \and 65R32}

\section{Introduction} \label{intro}
Large-scale discretizations of inverse problems \cite{Hansen} arise in
a variety of applications such as medical imaging,
non-destructive testing, and geoscience.
Due to the inherent instability of these problems, it is necessary
to apply regularization in order to compute meaningful
reconstructions, and this work focuses on the use of total variation
which is a powerful technique when the sought solution is required
to have sharp edges (see, e.g., \cite{Chan:05,Rudin92}
for applications in image reconstruction).


Many total variation algorithms have already been developed, such as
time marching \cite{Rudin92}, fixed-point iteration \cite{Vogel:96},
and various minimization-based methods such as sub-gradient methods
\cite{Alter:05,Combettes:02}, second-order cone programming (SOCP)
\cite{Goldfarb:05}, duality-based methods
\cite{Chambolle:04,Chan:98,Hintermuller:06}, and graph-cut
methods \cite{Chambolle:05,Darbon:06}.

The difficulty of a problem depends on the linear operator to be inverted. Most methods are dedicated to denoising, where the operator is simply the identity, or possibly deblurring of a simple blur, where the operator is invertible and represented by a fast transform. For general linear operators with no exploitable structure, such as in tomographic reconstruction, the selection of algorithms is limited. Furthermore, the systems that arise in real-world tomography applications, especially in 3D, are so large that memory-requirements preclude the use of second-order methods with quadratic convergence.
%
%

Recently, Nesterov's optimal first-order method \cite{Ne:04,Ne:05}
has been adapted to, and analyzed for, a number of imaging problems
\cite{Dahl:10,Weiss:09}.
In \cite{Weiss:09} it is shown that Nesterov's method
outperforms standard first-order methods by an order of magnitude,
but this analysis does not cover tomography problems.
A drawback of Nesterov's algorithm (see, e.g., \cite{Chambolle:2010})
is the explicit need for
the strong convexity parameter and the Lipschitz constant of the objective function, both of which are not available in practice.


This paper describes a practical implementation of Nesterov's algorithm,
augmented with efficient heuristic methods to estimate the unknown Lipschitz constant and strong convexity parameter.
The Lipschitz constant is handled using backtracking, similar to
the technique used in~\cite{beck:2009}.
To estimate the unknown strong convexity parameter -- which is more
difficult -- we propose a heuristic based on adjusting an estimate of
the strong convexity parameter using a local strong convexity inequality. Furthermore, we equip the heuristic with a restart procedure to ensure convergence in case of an inadequate estimate.

We call the algorithm \algname{} (Unknown Parameter Nesterov) and compare it
with two versions of the well-known gradient projection
algorithm; \algnameGP{}: a simple version using a backtracking line search for the stepsize and \algnameBB{}: a more advanced version using Barzilai-Borwein acceleration \cite{BB:88} and with
the backtracking procedure from \cite{Grippo:86}.
We also compare with a variant of the proposed algorithm, \algnameUPNz{}, where the strong convexity information is not enforced. This variant is similar to the FISTA algorithm \cite{beck:2009}. We have implemented the four algorithms in C with a MEX interface to MATLAB, and the software is available from \url{www2.imm.dtu.dk/~pch/TVReg/}.

Our numerical tests demonstrate that the proposed method \algname{} is significantly faster than \algnameGP{}, and as fast as \algnameBB{}
for moderately ill-conditioned problems, and
significantly faster for ill-con\-di\-tioned problems. Compared to \algnameUPNz, \algname{} is con\-sist\-ently faster, when solving to high accuracy.

We start with introductions to the discrete total variation problem,
to smooth and strongly convex functions, and to some basic
first-order methods in Sections \ref{sec:TVproblem}, \ref{sec:optimal},
and \ref{sec:basic}, respectively.
Section \ref{sec:inequalities} introduces important inequalities
while the new algorithm is described in Section~\ref{sec:UPN}.
The 3D tomography test problem is introduced in Section~\ref{sec:tomo}.
Finally, in Section \ref{sec:numerical} we report our numerical tests
and comparisons.

Throughout the paper we use the following notation.
The smallest singular value of a matrix $A$ is denoted $\sigma_{\min}(A)$. 
The smallest and largest eigenvalues of a sym\-metric semi-definite matrix $M$
are denoted by~$\lambda_{\min}(M)$ and $\lambda_{\max}(M)$.
For an optimization pro\-blem, $f$ is the objective function,
$x^\star$ denotes a minimizer, $f^\star = f(x^\star)$ is the optimum
objective, and $x$ is called an \hbox{$\epsilon$-suboptimal} solution if $f(x)-f^\star \leq \epsilon$. 

\section{The Discrete Total Variation Reconstruction Problem} \label{sec:TVproblem}

The Total Variation (TV) of a real function $\Xfun(t)$ with
$t\in\mathrm{\Omega}\subset\mathbb{R}^p$ is defined as
  \begin{align}
    \TTVV(\Xfun) = \int_{\mathrm{\Omega}} \| \nabla \Xfun (t) \|_2 \, dt.
  \end{align}
Note that the Euclidean norm is not squared, which means that $\TTVV(\Xfun)$
is non-differen\-tiable.
In order to handle this we consider a smoothed version of the TV functional.
Two common choices are to replace the Euclidean norm of the vector $z$ by either
$(\| z \|_2^2 + \beta^2)^{1/2}$ or the Huber function
  \begin{equation} \label{eq:huberdef}
    \Phi_\tau (z) = \left\{ 
\begin{array}{lll}
      \| z \|_2 - \half\tau & \quad \text{if} & \| z \|_2 \geq \tau, \\[2mm]
      \frac{1}{2\tau} \| z \|_2^2 & \quad \text{else}.&
   \end{array} 
\right.
\end{equation}
In this work we use the latter, which can be considered a prox-function
smoothing \cite{Ne:05} of the TV functional \cite{becker:2009}; thus,
the approximated TV functional is given by
  \begin{equation}
  \label{eq:tvtaudef}
    \TTVVtau(\Xfun) = \int_{\mathrm{\Omega}} \Phi_\tau \left(\nabla \Xfun \right) \, dt.
  \end{equation}

In this work we consider the case $t\in\mathbb{R}^3$.
To obtain a discrete version of the TV reconstruction problem, we represent
$\Xfun(t)$ by an $\vmax = m \times n \times l$ array $X$, and we let $x = \vec(X)$.
Each element or voxel of the array $X$, with index $\vidx$, has an associated
matrix (a discrete differential operator) $\Drun \in \mathbb{R}^{3 \times \vmax}$
such that the vector $\Drun\, x\in\mathbb{R}^3$ is the forward difference
approximation to the gradient at $x_\vidx$.
By stacking all $\Drun$ we obtain the matrix $D$ of dimensions $3\vmax \times \vmax$:
  \begin{equation}\label{eq:Dform}
    D = \left(\begin{array}{c} \D1 \\ \vdots \\ \Dend \end{array} \right).
  \end{equation}
We use periodic boundary conditions in $D$, which ensures that only a
constant $x$ has a TV of $0$. Other choices of boundary conditions could easily
be implemented.

When the discrete approximation to the gradient is used and the integration in
\eqref{eq:tvtaudef} is replaced by summations, the discrete and smoothed TV
function is given by
  \begin{equation}
  \label{eq:smoothedTV}
    \TV(x) = \sum_{\vidx=1}^{\vmax} \Phi_\tau(\Drun x) .
  \end{equation}
The gradient $\nabla T_\tau(x)\in\mathbb{R}^N$ of this function is given by
  \begin{equation}
    \nabla T_\tau(x) = \sum_{j=1}^N D_j^T D_j\, x / \max \{ \tau, \| D_j x \|_2 \} .
  \end{equation}

We assume that the sought reconstruction has voxel values
in the range $[0,1]$, so we wish to solve a bound-constrained problem, i.e.,  having the feasible region 
\hbox{$\Qset = \{ x \in \mathbb{R}^\vmax ~\vert~ 0 \leq x_\vidx \leq 1 \}$.}
Given a linear system $A\, x \approx b$ where $A \in \mathbb{R}^{\rmax\times \vmax}$
and \hbox{$\vmax=mnl$},
we define the associated \textit{discrete TV regularization problem} as
  \begin{equation}
  \label{eq:discreteTV}
    x^\star = \argmin_{x\in \Qset} \ \phi(x), \qquad
    \phi(x) = \half \| A\, x-b \|_2^2 + \alpha\, \TV(x) ,
  \end{equation}
where $\alpha > 0$ is the TV regularization parameter.
This is the problem we want to solve, for the case where the linear
system of equations arises from discretization of an inverse problem.

\section{Smooth and Strongly Convex Functions} \label{sec:optimal}

To set the stage for the algorithm development in this paper,
we consider the convex optimization problem $\min_{x\in \Qset} f(x)$
where $f$ is a convex function and $\Qset$ is a convex set.
We recall that a continuously differentiable function $f$ is convex if
  \begin{align} 
    f(x) \geq f(y) + \nabla f(y)^T (x-y), \qquad \forall \, x,y \in \mathbb{R}^\vmax .
  \end{align} 
\begin{definition}
A continuously differentiable convex function $f$ is said to be
\emph{strongly convex} with \emph{strong convexity parameter} $\mu$ if there exists a  $\mu \geq 0$ such that
  \begin{equation}
  \label{eq:defstronglyconvex}
    f(x) \geq f(y) + \nabla f(y)^T (x-y) + \half\mu \| x-y\|_2^2 ,
    \qquad \forall \, x,y \in \mathbb{R}^\vmax .
  \end{equation}
\end{definition}
\begin{definition}
A continuously differentiable convex function $f$ has \emph{Lipschitz continuous}
gradient with \emph{Lipschitz constant} $L$, if
  \begin{equation}
  \label{eq:defsmooth}
    f(x) \leq f(y) + \nabla f(y)^T(x-y) + \half L \|x-y\|_2^2 ,
    \qquad \forall \, x,y \in \mathbb{R}^\vmax .
  \end{equation}
\end{definition}
\begin{remark}
The condition \eqref{eq:defsmooth} is equivalent \cite[Theorem 2.1.5]{Ne:04}
to the more standard way of defining Lipschitz continuity of the gradient,
namely, through convexity and the condition
$\| \nabla f(x) - \nabla f(y) \|_2 \leq L \|x-y\|_2 , \forall \, x,y
\in \mathbb{R}^\vmax$.
\end{remark}
\begin{remark}
Lipschitz continuity of the gradient is a smoothness requirement on~$f$.
A function $f$ that satisfies \eqref{eq:defsmooth} is said to be smooth, and
$L$ is also known as the \emph{smoothness constant}.
\end{remark}

The set of functions that satisfy \eqref{eq:defstronglyconvex}
and (\ref{eq:defsmooth}) is denoted $\FmuL$.
It is clear that $\mu \leq L$ and also that if $\mu_1 \geq \mu_0$ and
$L_1 \leq L_0$ then $f\in \mathcal{F}_{\mu_1,L_1} \Rightarrow f\in
\mathcal{F}_{\mu_0,L_0}$.
Given fixed choices of $\mu$ and $L$, we introduce the ratio $Q = L/\mu$
(sometimes referred to as the ``modulus of strong convexity'' \cite{Nemirovsky:1983} or the
``condition number for~$f$'' \cite{Ne:04}) which is an
upper bound for the condition number of the Hessian matrix.
The number $Q$ plays a major role for the convergence rate of optimization
methods we will consider.
%
\begin{lemma}
\label{lem:quadraticmuL}
For the quadratic function $f(x) = \half \| A\, x - b \|_2^2$ with
$A\in\mathbb{R}^{\rmax\times \vmax}$ we have
  \begin{equation}
L = \| A \|_2^2, \qquad 
    \mu = \lambda_{\min}(A^TA) = 
\left\{ 
\begin{array}{lll}
      \sigma_{\min}(A)^2  &  \quad \mathrm{if} & \mathrm{rank}(A) = \vmax, \\
      0 , & \quad \mathrm{else,}  &
\end{array} 
\right.
  \end{equation}
and if $\mathrm{rank}(A)=N$ then $Q=\kappa(A)^2$, the square of
the condition number of~$A$.
\end{lemma}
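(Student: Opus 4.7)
The plan is to exploit the fact that $f(x)=\tfrac{1}{2}\|Ax-b\|_2^2$ is a quadratic, so its second-order Taylor expansion around any $y$ is exact. I would first compute $\nabla f(y) = A^T(Ay-b)$ and observe that the Hessian is the constant matrix $A^TA$, which is symmetric positive semi-definite. Consequently, for all $x,y\in\mathbb{R}^N$,
\begin{equation*}
  f(x) = f(y) + \nabla f(y)^T(x-y) + \tfrac{1}{2}(x-y)^T A^TA\,(x-y).
\end{equation*}
This identity is the single workhorse behind all three claims.

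Next I would apply the standard Rayleigh-quotient bounds $\lambda_{\min}(A^TA)\|z\|_2^2 \le z^T A^TA\, z \le \lambda_{\max}(A^TA)\|z\|_2^2$ with $z=x-y$. Inserted into the exact expansion, the upper bound gives \eqref{eq:defsmooth} with $L=\lambda_{\max}(A^TA)=\|A\|_2^2$, and the lower bound gives \eqref{eq:defstronglyconvex} with $\mu=\lambda_{\min}(A^TA)$. To argue these constants are the smallest/largest admissible, I would choose $x-y$ to be an eigenvector of $A^TA$ corresponding to $\lambda_{\max}$ (respectively $\lambda_{\min}$), so the Rayleigh inequality is attained with equality; no smaller $L$ nor larger $\mu$ can therefore satisfy \eqref{eq:defsmooth} or \eqref{eq:defstronglyconvex} for all $x,y$.

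For the rank-dependent formula for $\mu$, I would use the definition $\sigma_{\min}(A)=\sqrt{\lambda_{\min}(A^TA)}$. If $\rank(A)=N$ then $A$ has full column rank, $A^TA$ is positive definite, and $\lambda_{\min}(A^TA)=\sigma_{\min}(A)^2>0$. Otherwise $A^TA$ is singular (it has a nontrivial null space coinciding with that of $A$), so $\lambda_{\min}(A^TA)=0$. Finally, if $\rank(A)=N$, then $\kappa(A)=\sigma_{\max}(A)/\sigma_{\min}(A)$, and combining the formulas for $L$ and $\mu$ gives $Q=L/\mu=\sigma_{\max}(A)^2/\sigma_{\min}(A)^2=\kappa(A)^2$.

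There is no real obstacle here; the statement is essentially a translation of spectral bounds on the constant Hessian $A^TA$ into the language of Definitions used for $\FmuL$. The only thing to be a bit careful about is verifying that the values of $L$ and $\mu$ are tight rather than merely valid, which is handled by the eigenvector choice above.
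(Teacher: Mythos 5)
Your proposal is correct and follows essentially the same route as the paper: both rely on the exact second-order Taylor expansion $f(x)=f(y)+\nabla f(y)^T(x-y)+\tfrac{1}{2}(x-y)^TA^TA(x-y)$ and then read off $L=\lambda_{\max}(A^TA)=\|A\|_2^2$ and $\mu=\lambda_{\min}(A^TA)$ from the spectral bounds on the constant Hessian. Your additional eigenvector argument for tightness of the constants is a harmless (and correct) refinement that the paper omits, deferring such concerns to a later remark.
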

\begin{proof}
Follows from 
$f(x) = f(y) + (A\, y - b)^T A (x-y) + \half (x-y)^T A^T A (x-y)$,
the \hbox{second} order Taylor expansion of $f$ about $y$, where equality holds
for quadratic $f$.  \hfill $\Box$
\end{proof}
\begin{lemma}
\label{lem:TVmuL}
For the smoothed TV function \eqref{eq:smoothedTV} we have
  \begin{equation}
    L = \| D \|_2^2 / \tau , \qquad \mu = 0 ,
  \end{equation}
where $\| D \|_2^2 \leq 12$ in the 3D case.
\end{lemma}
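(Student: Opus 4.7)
The plan is to handle three separate claims: $\mu = 0$, the value $L = \|D\|_2^2/\tau$, and the spectral bound $\|D\|_2^2 \leq 12$ in the 3D setting. I would start with $\mu$: because $D$ uses periodic boundary conditions, the all-ones vector $e = (1,\ldots,1)^T \in \mathbb{R}^\vmax$ lies in the kernel of every $\Drun$, so $\TV(x + te) = \TV(x)$ for every $t\in\mathbb{R}$. If \eqref{eq:defstronglyconvex} held with any $\mu > 0$, taking $y = x$ and moving $x$ along $e$ would force quadratic growth $\half\mu\, t^2\|e\|_2^2$ in that direction, contradicting the invariance. Hence no positive $\mu$ is admissible, and $\mu = 0$ is the best constant.

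For the Lipschitz bound I would first show that the Huber function in \eqref{eq:huberdef} has a $(1/\tau)$-Lipschitz gradient on $\mathbb{R}^3$. Direct differentiation gives $\nabla \Phi_\tau(z) = z/\max\{\tau,\|z\|_2\}$, and checking the four combinations of whether $\|z\|_2$ and $\|w\|_2$ lie above or below $\tau$ yields $\|\nabla\Phi_\tau(z) - \nabla\Phi_\tau(w)\|_2 \leq \tau^{-1}\|z-w\|_2$. Writing $\TV(x) = \bar\Phi(D\,x)$, where $\bar\Phi(u) = \sum_{\vidx=1}^{\vmax} \Phi_\tau(u_\vidx)$ is block-separable on $\mathbb{R}^{3\vmax}$, the map $\nabla\bar\Phi$ is also $(1/\tau)$-Lipschitz (the block-diagonal structure inherits the per-block bound). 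The chain rule then delivers
\begin{equation*}
\|\nabla\TV(x) - \nabla\TV(y)\|_2 \leq \|D\|_2 \cdot \tau^{-1}\|D(x-y)\|_2 \leq (\|D\|_2^2/\tau)\,\|x-y\|_2,
\end{equation*}
matching $L = \|D\|_2^2/\tau$ in \eqref{eq:defsmooth}. The mixed case (one argument inside the quadratic region, the other outside) in the Lipschitz estimate for $\nabla\Phi_\tau$ is the only genuinely technical point and is the step I would be most careful about.

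Finally, to bound $\|D\|_2^2$ I would regroup the rows of $D$ by coordinate direction instead of by voxel. This row permutation preserves the spectral norm, and the reordered matrix is the vertical stacking of $D^{(1)}, D^{(2)}, D^{(3)}\in\mathbb{R}^{\vmax\times\vmax}$, where $D^{(k)}$ is the cyclic forward difference along axis $k$. Since $D^{(k)} = I - P^{(k)}$ with $P^{(k)}$ a unitary cyclic shift, the triangle inequality gives $\|D^{(k)}\|_2 \leq 2$, and therefore
\begin{equation*}
\|D\|_2^2 = \Bigl\|\sum_{k=1}^{3} D^{(k)\,T} D^{(k)}\Bigr\|_2 \leq \sum_{k=1}^{3}\|D^{(k)}\|_2^2 \leq 12,
\end{equation*}
completing the proof.
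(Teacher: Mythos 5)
Your proof is correct, but for two of the three claims it takes a genuinely different route from the paper. The paper's argument for $\mu=0$ is essentially yours: it evaluates the strong convexity inequality along the constant direction $e$ (taking $x=\beta e$, $y=\alpha e$, where $\TV$ and its gradient vanish), so on that point the two proofs coincide. For the Lipschitz constant, however, the paper does not differentiate the Huber function at all; it writes $\TV(x)=\max_u\{u^TDx-\frac{\tau}{2}\|u\|_2^2 : \|u_i\|_2\le 1\}$ and invokes Nesterov's smoothing theorem \cite[Thm.~1]{Ne:05}, which directly yields $L=\|D\|_2^2/\tau$ for such prox-smoothed max-functions. Your route instead shows $\nabla\Phi_\tau(z)=z/\max\{\tau,\|z\|_2\}$ is $\tau^{-1}$-Lipschitz (it equals $\tau^{-1}$ times the projection onto the ball of radius $\tau$, which makes your case analysis unnecessary) and composes with $D$ via the chain rule; this is more elementary and self-contained, at the cost of the blockwise Lipschitz verification, whereas the paper's citation-based argument is shorter and also explains why this smoothing has the standard $O(1/\tau)$ constant. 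Likewise, for $\|D\|_2^2\le 12$ the paper simply refers to an extension of the appendix of \cite{Dahl:10}, while you give an explicit argument: permuting rows to stack the three cyclic difference matrices $D^{(k)}=P^{(k)}-I$, bounding $\|D^{(k)}\|_2\le 2$ since $P^{(k)}$ is orthogonal, and using $\|D\|_2^2=\bigl\|\sum_k D^{(k)T}D^{(k)}\bigr\|_2\le\sum_k\|D^{(k)}\|_2^2\le 12$. That makes your write-up self-contained where the paper relies on external references, and both yield valid (not necessarily tight) constants, consistent with Remark \ref{remark:nontight}.
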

\begin{proof}
The result for $L$ follows from \cite[Thm.~1]{Ne:05} since the
smoothed TV functional can be written as \cite{becker:2009,Dahl:10}
  \[
    \TV(x) = \max_u \left \{ \, u^TDx - \frac{\tau}{2} \, \| u \|_2^2\, : \,
    \| u_i \|_2 \leq 1, \ \forall\, i=1,\ldots, N  \, \right \}
\]
with $u=(u_1^T,\ldots,u_N^T)^T$ stacked according to $D$.
The inequality $\|D\|_2^2 \leq 12$ follows from a straightforward
extension of the proof in the Appendix of \cite{Dahl:10}. For $\mu$ pick  \hbox{$y = \alpha e\in \mathbb{R}^N$} and $x=\beta e\in \mathbb{R}^N$, where
$e=(1,\ldots,1)^T$, and $\alpha\neq\beta \in \mathbb{R}$.
Then we get \hbox{$\TV(x)=\TV(y)=0$}, $\nabla \TV(y)=0$ and obtain
  \[
    \half\mu \|x-y\|_2^2 \leq \TV(x) - \TV(y) - \nabla \TV(y)^T(x-y) = 0,
  \]
and hence $\mu=0$.
\hfill $\Box$
\end{proof}
\begin{theorem}
\label{thm:Qtight}
For the function $\phi(x)$ defined in \eqref{eq:discreteTV} we have a strong
convexity parameter $\mu = \lambda_{\min}(A^TA)$ and Lipschitz constant
$L = \| A \|_2^2 + \alpha\, \| D \|_2^2 / \tau$.
If $\mathrm{rank}(A)<N$  then $\mu =0$, otherwise $\mu=\sigma_{\min}(A)^2>0$
and
  \begin{equation}
  \label{eq:Qf}
    Q = \kappa(A)^2 + \frac{\alpha}{\tau} \,
    \frac{\| D \|_2^2}{\sigma_{\min}(A)^2} ,
  \end{equation}
where $\kappa(A) = \|A\|_2 / \sigma_{\min}(A)$ is the condition number of $A$.
\end{theorem}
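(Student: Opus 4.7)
The plan is to decompose $\phi$ as $\phi(x) = f_1(x) + \alpha f_2(x)$ with $f_1(x) = \half\|Ax-b\|_2^2$ and $f_2(x) = \TV(x)$, and then combine the constants from Lemmas \ref{lem:quadraticmuL} and \ref{lem:TVmuL} using the fact that both the strong convexity inequality \eqref{eq:defstronglyconvex} and the Lipschitz inequality \eqref{eq:defsmooth} are additive under nonnegative linear combinations. Concretely, I would first write the strong convexity inequality for $f_1$ with parameter $\mu_1=\lambda_{\min}(A^TA)$ and $\alpha$ times the strong convexity inequality for $f_2$ with parameter $\mu_2=0$, and add them; this yields \eqref{eq:defstronglyconvex} for $\phi$ with parameter $\mu_1+\alpha\mu_2=\lambda_{\min}(A^TA)$. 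The same summation applied to \eqref{eq:defsmooth} with constants $L_1=\|A\|_2^2$ and $L_2=\|D\|_2^2/\tau$ gives the Lipschitz constant $L=\|A\|_2^2+\alpha\|D\|_2^2/\tau$ for $\phi$.

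Next, I would handle the rank dichotomy. If $\rank(A)<N$, then $A^TA$ has a nontrivial kernel, so $\lambda_{\min}(A^TA)=0$ and hence $\mu=0$. If $\rank(A)=N$, then $A^TA$ is positive definite and $\lambda_{\min}(A^TA)=\sigma_{\min}(A)^2>0$, as already noted in Lemma \ref{lem:quadraticmuL}. In the latter case the ratio $Q=L/\mu$ is
\begin{equation*}
Q \;=\; \frac{\|A\|_2^2+\alpha\|D\|_2^2/\tau}{\sigma_{\min}(A)^2}
  \;=\; \frac{\|A\|_2^2}{\sigma_{\min}(A)^2} + \frac{\alpha}{\tau}\,\frac{\|D\|_2^2}{\sigma_{\min}(A)^2}
  \;=\; \kappa(A)^2 + \frac{\alpha}{\tau}\,\frac{\|D\|_2^2}{\sigma_{\min}(A)^2},
\end{equation*}
which is \eqref{eq:Qf}.

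There is essentially no obstacle here beyond the bookkeeping: the non-trivial analytic work (the identification of the individual $\mu_i$ and $L_i$, and in particular the bound $\|D\|_2^2 \le 12$ and the vanishing of $\mu_2$ via the constant-vector argument) has already been done in the two preceding lemmas. The only point that deserves a brief remark is the additivity of \eqref{eq:defstronglyconvex} and \eqref{eq:defsmooth} under scaling by $\alpha\ge 0$, which is immediate by inspection of the inequalities term by term.
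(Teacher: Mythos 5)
Your proposal is correct and follows essentially the same route as the paper: decompose $\phi$ into the quadratic data-fit term and the (scaled) smoothed TV term, invoke Lemmas \ref{lem:quadraticmuL} and \ref{lem:TVmuL}, and use the additivity of the inequalities \eqref{eq:defstronglyconvex} and \eqref{eq:defsmooth} under nonnegative combinations, together with the rank dichotomy for $A^TA$. The only cosmetic difference is that you factor out $\alpha$ explicitly rather than absorbing it into $h(x)=\alpha\TV(x)$ as the paper does, which changes nothing of substance.
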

\begin{proof}
Assume $\rank(A) = \vmax$ and consider $f(x) = g(x) + h(x)$ with
$g\in\mathcal{F}_{\mu_g,L_g}$ and $h\in\mathcal{F}_{\mu_h,L_h}$.
Then $f \in \mathcal{F}_{\mu_f,L_f}$, where $\mu_f = \mu_g + \mu_h$
and $L_f = L_g + L_h$.
From $\mu_f$ and $L_f$ and using Lemmas \ref{lem:quadraticmuL} and
\ref{lem:TVmuL} with $g(x) = \half \| A\, x-b \|_2^2$ and
$h(x) = \alpha \TV(x)$ we obtain the condition number for $\phi$
given in \eqref{eq:Qf}.
If $\rank(A)<N$ then the matrix $A^TA$ has at least one zero eigenvalue,
and thus $\mu=0$.
\hfill $\Box$
\end{proof}
\begin{remark} \label{remark:nontight}
Due to the inequalities used to derive \eqref{eq:Qf}, there is no
guarantee that the given $\mu$ and $L$ are the tightest possible for~$\phi$.
For $\rank(A)<N$ there exist problems for which the Hessian matrix is singular
and hence $\mu=0$, but we cannot say if this is always the case.
\end{remark}

\section{Some Basic First-Order Methods} \label{sec:basic}
A basic first-order method is the gradient projection method of the form
  \begin{align} \label{eq:gradientmethod}
    x^{(k+1)} = \PQ \left(x^{(k)} - \theta_k \nabla f(x^{(k)}) \right ) ,
    \qquad k = 0,1,2,\ldots \ .
  \end{align}
The following theorem summarizes the convergence properties.
\begin{theorem} \label{thm:gradientmethod}
Let $f \in \FmuL $, $\theta_k = 1/L$ and $x^\star\in \Qset$ be the
constrained minimizer of $f$, then for the gradient method
\eqref{eq:gradientmethod} we have
  \begin{equation}
    f(x^{(k)}) - f^\star \leq \frac{L}{2 k} \|x^{(0)} - x^\star\|_2^2 .
  \label{eq:gradientrate2}
  \end{equation}
Moreover, if $\mu\neq 0$ then
  \begin{equation}
    f(x^{(k)}) - f^\star \leq  \left(1-\frac{\mu}{L} \right)^k
    \bigl( f(x^{(0)}) - f^\star \bigr) .
  \label{eq:gradientrate1}
  \end{equation}
\end{theorem}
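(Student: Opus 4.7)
The plan is to pivot on a single one-step ``three-point'' descent inequality, from which both parts of the theorem follow by either telescoping or iterating a geometric contraction. Let $x^+ = \PQ(x - (1/L)\nabla f(x))$ denote one projected-gradient step. I would combine three standard ingredients: the $L$-smoothness bound \eqref{eq:defsmooth} at the pair $(x,x^+)$; the first-order optimality of the projection, i.e., $x^+$ minimizes $\nabla f(x)^T(\,\cdot - x) + \frac{L}{2}\|\,\cdot - x\|_2^2$ over $\Qset$, which gives $(\nabla f(x) + L(x^+-x))^T(y-x^+)\geq 0$ for all $y \in \Qset$; and the strong-convexity bound \eqref{eq:defstronglyconvex}. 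After using the polarization identity $2(x^+-x)^T(y-x^+) = \|y-x\|_2^2 - \|x^+-x\|_2^2 - \|y-x^+\|_2^2$, these three ingredients collapse into the workhorse bound
\begin{equation*}
f(x^+) \;\leq\; f(y) + \tfrac{L-\mu}{2}\|y-x\|_2^2 - \tfrac{L}{2}\|y-x^+\|_2^2, \qquad \forall\, y \in \Qset.
\end{equation*}

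For \eqref{eq:gradientrate2} I would set $\mu=0$ in this inequality. Taking $y=x$ gives $f(x^+) \leq f(x) - \frac{L}{2}\|x^+-x\|_2^2$, so $\{f(x^{(j)})\}$ is non-increasing. Taking $y = x^\star$ yields $f(x^{(j+1)}) - f^\star \leq \frac{L}{2}(\|x^{(j)}-x^\star\|_2^2 - \|x^{(j+1)}-x^\star\|_2^2)$; summing over $j=0,\ldots,k-1$ telescopes the right-hand side to $\frac{L}{2}\|x^{(0)}-x^\star\|_2^2$, while monotonicity bounds the left-hand sum below by $k(f(x^{(k)}) - f^\star)$, which is \eqref{eq:gradientrate2}.

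For \eqref{eq:gradientrate1} I keep $\mu>0$ and set $y=x^\star$; since $f(x^{(j+1)}) - f^\star \geq 0$, the one-step inequality yields $\|x^{(j+1)}-x^\star\|_2^2 \leq (1-\mu/L)\|x^{(j)}-x^\star\|_2^2$, and induction gives $\|x^{(k)}-x^\star\|_2^2 \leq (1-\mu/L)^k\|x^{(0)}-x^\star\|_2^2$. The stated function-value rate then follows by combining this geometric decay with an upper bound on $f(x^{(k)}) - f^\star$ in terms of the squared distance (from the three-point inequality with $y=x^\star$ applied at the previous iterate) and the lower bound $f(x^{(0)}) - f^\star \geq \frac{\mu}{2}\|x^{(0)}-x^\star\|_2^2$ obtained by applying \eqref{eq:defstronglyconvex} at $x=x^{(0)}$, $y=x^\star$ together with the constrained-stationarity inequality $\nabla f(x^\star)^T(x^{(0)}-x^\star)\geq 0$. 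The main obstacle is the three-point inequality itself: fusing the smoothness, convexity, and projection-optimality terms without sign errors is the only delicate algebra, after which both rates reduce to bookkeeping.
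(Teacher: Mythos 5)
Your workhorse three-point inequality is correct, and your treatment of \eqref{eq:gradientrate2} (telescoping with $y=x^\star$ plus monotone descent from $y=x^{(j)}$) is sound; note the paper itself only cites \cite{Va:09} and \cite[\S 7.1.4]{Nemirovsky:1983} rather than proving anything. The genuine gap is in your derivation of \eqref{eq:gradientrate1}: the chain you describe does not produce the stated constant. From the workhorse bound with $y=x^\star$ you correctly get $\|x^{(k)}-x^\star\|_2^2\le(1-\mu/L)^k\|x^{(0)}-x^\star\|_2^2$, and then you convert via $f(x^{(k)})-f^\star\le\tfrac{L-\mu}{2}\|x^{(k-1)}-x^\star\|_2^2$ and $\|x^{(0)}-x^\star\|_2^2\le\tfrac{2}{\mu}\bigl(f(x^{(0)})-f^\star\bigr)$. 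Putting these together yields
\begin{equation*}
f(x^{(k)})-f^\star\;\le\;\frac{L}{\mu}\Bigl(1-\frac{\mu}{L}\Bigr)^{k}\bigl(f(x^{(0)})-f^\star\bigr),
\end{equation*}
i.e.\ the right geometric rate but with a spurious factor $Q=L/\mu\ge 1$, so the claimed inequality \eqref{eq:gradientrate1} (which has constant $1$) is not established by this route.

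The fix stays entirely within your framework: instead of converting through distances, get a per-iteration contraction of the function values directly by evaluating your three-point inequality at the feasible point $y=(1-t)\,x^{(k)}+t\,x^\star\in\Qset$ with $t=\mu/L$, and bounding $f(y)$ by strong convexity along the segment, $f(y)\le(1-t)f(x^{(k)})+t f^\star-\tfrac{\mu}{2}t(1-t)\|x^{(k)}-x^\star\|_2^2$. The coefficient of $\|x^{(k)}-x^\star\|_2^2$ becomes $\tfrac{t}{2}(Lt-\mu)$, which vanishes exactly at $t=\mu/L$, leaving
\begin{equation*}
f(x^{(k+1)})-f^\star\;\le\;\Bigl(1-\frac{\mu}{L}\Bigr)\bigl(f(x^{(k)})-f^\star\bigr),
\end{equation*}
and induction gives \eqref{eq:gradientrate1} with the correct constant. (Two minor points: you implicitly need $x^{(0)}\in\Qset$ both for the $y=x$ step at $j=0$ and for the stationarity inequality $\nabla f(x^\star)^T(x^{(0)}-x^\star)\ge 0$; and in the first part it suffices to drop the $\mu$-term rather than ``set $\mu=0$'', since $f\in\FmuL$ with $\mu\ge 0$.)
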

\begin{proof}
The two bounds follow from \cite{Va:09} and \cite[\S 7.1.4]{Nemirovsky:1983},
respectively. \hfill $\Box$
\end{proof}

To improve the convergence of the gradient (projection) method, Barzilai
and Borwein \cite{BB:88} suggested a scheme in which the step
$\theta_k \nabla f(x^{(k)})$ provides a simple and computationally
cheap approximation to the Newton step $(\nabla^2 f(x^{(k)}))^{-1} \nabla
f(x^{(k)})$.  For general unconstrained problems with $f\in\FmuL$,
possibly with $\mu=0$, non-monotone line search combined with the Barzilai-Borwein (BB)
strategy produces algorithms that converge \cite{Raydan:97}; but it is difficult to
give a precise iteration complexity for such algorithms.  For strictly
quadratic unconstrained problems the
BB strategy requires $\OC \left( Q \log \epsilon^{-1} \right)$
iterations to obtain an \hbox{$\epsilon$-suboptimal} solution \cite{Dai:2002}.  In
\cite{Fletcher:90} it was argued that, in practice, $\OC\left( Q \log
  \epsilon^{-1} \right)$ iterations ``is the best that could be
expected''.  This comment is also supported by the statement in
\cite[p.~69]{Ne:04} that all ``reasonable step-size rules'' have the
same iteration complexity as the standard gradient method.  Note that
the classic gradient method \eqref{eq:gradientmethod} has $\OC
(L/\epsilon)$ complexity for $f\in \FzL$.  To summarize, when using
the BB strategy we should not expect better complexity than $\OC
(L/\epsilon)$ for $f\in \FzL$, and $\OC \left( Q \log \epsilon^{-1}
\right)$ for $f\in \FmuL$.

In Algorithm \ref{algo:BB} we give the (conceptual) algorithm \algnameBB{}, which implements the BB strategy
with non-monotone line search \cite{Birgin:00,Zhu:08} using the
backtracking procedure from \cite{Grippo:86} (initially combined in~\cite{Raydan:97}).
The algorithm needs the real parameter \hbox{$\sigma \in [0,1]$} and
the nonnegative integer~$K$, the latter specifies the number of iterations over which
an objective decrease is guaranteed.
\begin{algorithm}[tb]\normalsize 
\SetKwInOut{Input}{input}\SetKwInOut{Output}{output}\SetKw{Abort}{abort}
\Input{$x^{(0)}$, $K$}
\Output{$x^{(k+1)}$}
$\theta_0 = 1$ \;
\For{$k = 0, 1, 2, \dots$}{
\tcp{BB strategy}
\If{$k>0$}{
$\theta_k \leftarrow \frac{\|x^{(k)} - x^{(k-1)} \|_2^2}{\langle \,
  x^{(k)} - x^{(k-1)}, \nabla f(x^{(k)}) - \nabla f(x^{(k-1)}) \, \rangle}$ \;
  } 
  $\beta\leftarrow 0.95$ \;
  $\bar{x} \leftarrow \PQ( x^{(k)} - \beta \theta_k \nabla f(x^{(k)}) )$ \;
  $\hat{f} \leftarrow \max\{ f(x^{(k)}), f(x^{(k-1)}),\ldots,f(x^{(k-K)}) \}$ \;
  \While{$f(\bar{x}) \geq \hat{f} - \sigma \,
  \nabla f(x^{(k)})^T(x^{(k)}-\bar{x})$}{
  $\beta \leftarrow \beta^2$ \;
  $\bar{x} \leftarrow \PQ(x^{(k)} - \beta\theta_k \nabla f(x^{(k)}))$ \;
  } 
  $x^{(k+1)} \leftarrow \bar{x}$ \; } 
\caption{\algnameBB{}}\label{algo:BB}
\end{algorithm}

An alternative approach is to consider first-order methods with optimal
complexity. The optimal complexity is defined as the worst-case
complexity for a first-order method applied to any problem in a
certain class \cite{Nemirovsky:1983,Ne:04} (there are also more
technical aspects involving the problem dimensions and a black-box
assumption). In this paper we focus on the classes $\FzL$ and~$\FmuL$.

Recently there has been a great deal of interest
in optimal first-order methods for convex optimization problems with
$f\in\FzL$ \cite{beck:2009b,Tse:08}.
For this class it is possible to reach an $\epsilon$-suboptimal so\-lu\-tion
within $\OC(\sqrt{L/\epsilon})$ iterations.
Nesterov's methods can be used as stand-alone optimization algorithm,
or in a composite objective
setup \cite{beck:2009,Ne:07,Tse:08}, in which case
they are called accelerated methods (because the designer violates the
black-box assumption).
Another option is to apply optimal first-order methods to a smooth approximation
of a non-smooth function leading to an algorithm with
$\OC \left ( 1/\epsilon\right)$ complexity \cite{Ne:05}; for practical considerations, see \hbox{\cite{becker:2009,Dahl:10}}.

%
%
Optimal methods specific for the function class $\FmuL$ with $\mu>0$
are also known \cite{Ne:83,Ne:04}; see also \cite{Ne:07} for the composite
objective version. However, these methods have gained little practical consideration; for example in \cite{Ne:07} all the simulations are conducted with $\mu=0$.
Optimal methods require $\OC \left( \sqrt{Q} \log \epsilon^{-1} \right)$
iterations while the classic gradient method requires
$\OC \left( Q \log \epsilon^{-1} \right)$ iterations \cite{Nemirovsky:1983,Ne:04}.
For quadratic problems, the conjugate gradient method achieves the same iteration
complexity as the optimal first-order method~\cite{Nemirovsky:1983}.

In Algorithm \ref{algo:Nesterov} we state the basic optimal method \algnameNesterov{} \cite{Ne:04} with known $\mu$ and $L$;
it requires an initial $\theta_0\geq \sqrt{\mu/L}$.
Note that it uses two sequences of vectors, $x^{(k)}$ and $y^{(k)}$.
\begin{algorithm}[tb]\normalsize
\SetKwInOut{Input}{input}\SetKwInOut{Output}{output}\SetAlgoSkip{smallskip}
\Input{$x^{(0)}$, $\mu$, $L$, $\theta_0$}
\Output{$x^{(k+1)}$}
 $y^{(0)} \leftarrow x^{(0)}$\;
\For{$k = 0, 1, 2, \dots$}{
$x^{(k+1)} \leftarrow \PQ \bigl(y^{(k)} - L^{-1} \nabla f(y^{(k)}) \bigr)$ \label{algoline:nesterovxk1}\;
$\theta_{k+1} \leftarrow \mathrm{positive}~\mathrm{root}~\mathrm{of}~
  \theta^2 = (1-\theta)\theta_k^2 + \frac{\mu}{L}\theta$ \;
$\beta_k \leftarrow \theta_k(1-\theta_k)/(\theta_k^2 + \theta_{k+1})$ \;
$y^{(k+1)} \leftarrow x^{(k+1)} + \beta_k (x^{(k+1)} - x^{(k)})$ \;
} 
\caption{\algnameNesterov{}}\label{algo:Nesterov} 
\end{algorithm}
The convergence rate is provided by the following theorem.
\begin{theorem} \label{thm:nesterov}
If $f \in \FmuL $, $1>\theta_0 \geq \sqrt{\mu/L}$, and
$\gamma_0 = \frac{\theta_0(\theta_0L-\mu)}{1-\theta_0}$, then for
algorithm \algnameNesterov{} we have
  \begin{equation} \label{eq:nesterovrate}
    f(x^{(k)}) - f^\star \leq \frac{4L}{(2\sqrt{L} + k\sqrt{\gamma_0})^2}
    \left( f(x^{(0)}) - f^\star +
    \frac{\gamma_0}{2}\|x^{(0)} - x^\star\|_2^2\right).
  \end{equation}
Moreover, if $\mu\neq 0$
  \begin{equation} \label{eq:nesterovrate_2}
    f(x^{(k)}) - f^\star \leq \left(1-\sqrt{\frac{\mu}{L}}\right)^k
    \left( f(x^{(0)}) - f^\star +
    \frac{\gamma_0}{2}\|x^{(0)} - x^\star\|_2^2\right).
  \end{equation}
\end{theorem}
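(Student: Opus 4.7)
The plan is to follow Nesterov's estimate-sequence technique [Ne:04, Sec.\ 2.2]. I would construct a sequence of quadratic lower models $\phi_k(x)$ and scalars $\lambda_k \in (0,1]$ satisfying $\phi_k(x) \leq (1-\lambda_k) f(x) + \lambda_k \phi_0(x)$ for all $x$, together with the invariant $f(x^{(k)}) \leq \phi_k^\star := \min_x \phi_k(x)$; setting $x = x^\star$ then yields
\begin{equation*}
f(x^{(k)}) - f^\star \leq \lambda_k \bigl( \phi_0(x^\star) - f^\star \bigr),
\end{equation*}
so both bounds in the theorem reduce to two different upper estimates on $\lambda_k$. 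I would initialize with $\phi_0(x) = f(x^{(0)}) + \half\gamma_0 \| x - x^{(0)} \|_2^2$ so that $\phi_0(x^\star) - f^\star$ matches the parenthesized factor on the right-hand side of both \eqref{eq:nesterovrate} and \eqref{eq:nesterovrate_2}.

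Next, I would update the estimate sequence by
\begin{equation*}
\phi_{k+1}(x) = (1-\theta_k)\phi_k(x) + \theta_k \Bigl[ f(y^{(k)}) + \nabla f(y^{(k)})^T(x-y^{(k)}) + \half\mu \| x - y^{(k)} \|_2^2 \Bigr],
\end{equation*}
which preserves the lower-bound property by \eqref{eq:defstronglyconvex} and keeps $\phi_k$ quadratic with curvature $\gamma_{k+1} = (1-\theta_k)\gamma_k + \theta_k \mu$. The choice $\gamma_0 = \theta_0(\theta_0 L-\mu)/(1-\theta_0)$ gives $\gamma_k = L\theta_{k-1}^2$ for all $k\geq 1$, which is precisely what the algorithm's $\theta$-recursion assumes. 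The invariant $f(x^{(k)}) \leq \phi_k^\star$ is then established by induction: the inductive step combines the $L$-smooth descent inequality applied to the projected gradient step $x^{(k+1)} = \PQ(y^{(k)} - L^{-1}\nabla f(y^{(k)}))$ with the analysis-level definition of $y^{(k)}$ as a convex combination of $x^{(k)}$ and the minimizer $v^{(k)}$ of $\phi_k$, chosen so that the linear terms cancel. Eliminating $v^{(k)}$ in favor of $x^{(k)}$ produces exactly $y^{(k+1)} = x^{(k+1)} + \beta_k(x^{(k+1)} - x^{(k)})$ with $\beta_k = \theta_k(1-\theta_k)/(\theta_k^2 + \theta_{k+1})$, matching Algorithm~\ref{algo:Nesterov}.

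Finally I would bound $\lambda_k = \prod_{i=0}^{k-1}(1-\theta_i)$. For \eqref{eq:nesterovrate_2}, the recursion $\theta_{k+1}^2 = (1-\theta_{k+1})\theta_k^2 + (\mu/L)\theta_{k+1}$ together with $\theta_0 \geq \sqrt{\mu/L}$ implies $\theta_k \geq \sqrt{\mu/L}$ by induction, hence $\lambda_k \leq (1-\sqrt{\mu/L})^k$. For \eqref{eq:nesterovrate} I would use the standard estimate $\theta_{k+1}^{-1} - \theta_k^{-1} \geq \half$ (derivable from the $\theta$-recursion using $\gamma_k = L\theta_{k-1}^2$) to conclude $\theta_k \leq 2/(k + 2\sqrt{L/\gamma_0})$, which gives $\lambda_k \leq 4L/(2\sqrt{L} + k\sqrt{\gamma_0})^2$. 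The main obstacle is the invariant step under the projection onto $\Qset$: the bare descent inequality must be replaced by an argument using the optimality of the projected step (i.e.\ the composite gradient mapping), so that the constraint $x^{(k+1)} \in \Qset$ is absorbed cleanly while the cross terms with $v^{(k)}$ still cancel; once this is in place, the rest of the analysis is algebraic and parallels the unconstrained case.
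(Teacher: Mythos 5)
Your proposal is correct in outline, and it is essentially the estimate-sequence proof of Nesterov that the paper itself cites as its primary justification ([Ne:04, Theorem 2.2.3, scheme (2.2.19)]): the initialization $\phi_0(x)=f(x^{(0)})+\half\gamma_0\|x-x^{(0)}\|_2^2$, the curvature recursion $\gamma_{k+1}=(1-\theta_k)\gamma_k+\theta_k\mu=L\theta_k^2$ (which is exactly what $\gamma_0=\theta_0(\theta_0L-\mu)/(1-\theta_0)$ buys), the bound $f(x^{(k)})-f^\star\leq\lambda_k(\phi_0(x^\star)-f^\star)$ with $\lambda_k=\prod_{i=0}^{k-1}(1-\theta_i)$, and the two estimates $\theta_k\geq\sqrt{\mu/L}$ and $\lambda_k\leq 4L/(2\sqrt{L}+k\sqrt{\gamma_0})^2$ are all the standard ingredients. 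The paper's own written-out derivation (Appendix A) deliberately avoids estimate functions: it works directly with the auxiliary sequence $z^{(k)}$ and $\gamma_k$, uses the gradient-map inequality of Lemma \ref{lemma_gradient_map} twice (once at $x=x^{(k)}$ with the strong-convexity term dropped, once at $x=x^\star$), and assembles a per-iteration contraction $f(x^{(k+1)})-f^\star+\frac{\gamma_{k+1}}{2}\|z^{(k+1)}-x^\star\|_2^2\leq(1-\theta_k)\bigl(f(x^{(k)})-f^\star+\frac{\gamma_k}{2}\|z^{(k)}-x^\star\|_2^2\bigr)$, in the style of Tseng and Vandenberghe; the advantage of that route for this paper is that $\mu$ and $L$ can be replaced by iteration-dependent $\mu_k,L_k$, which is what the \algname{} analysis needs. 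Two points in your sketch deserve attention. First, the projection issue you flag at the end is precisely where the naive update $\phi_{k+1}(x)=(1-\theta_k)\phi_k(x)+\theta_k[f(y^{(k)})+\nabla f(y^{(k)})^T(x-y^{(k)})+\half\mu\|x-y^{(k)}\|_2^2]$ does not suffice on $\Qset$: the inductive step requires the gradient-map lower bound of Lemma \ref{lemma_gradient_map} (Nesterov's Theorem 2.2.7), i.e.\ replacing $f(y^{(k)})+\nabla f(y^{(k)})^T(x-y^{(k)})$ by $f(x^{(k+1)})+G_L(y^{(k)})^T(x-y^{(k)})+\frac{1}{2L}\|G_L(y^{(k)})\|_2^2$; you name the right tool but do not carry it out, so this is the one substantive step left open. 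Second, your route to the sublinear bound via $\theta_{k+1}^{-1}-\theta_k^{-1}\geq\half$ and $\lambda_{k+1}\leq L\theta_k^2/\gamma_0$ produces the stated constant only up to an index shift; the clean argument (Nesterov's Lemma 2.2.4, which the paper invokes verbatim) bounds the increments of $1/\sqrt{\lambda_k}$ directly using $L\theta_k^2=\gamma_{k+1}\geq\gamma_0\lambda_{k+1}$.
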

\begin{proof}
See \cite[(2.2.19), Theorem 2.2.3]{Ne:04} and Appendix \ref{app:optimal_convergence_rate}
for an alternative proof.
\hfill $\Box$
\end{proof}

Except for different constants Theorem \ref{thm:nesterov} mimics the result in
Theorem \ref{thm:gradientmethod}, with the crucial differences that the denominator
in \eqref{eq:nesterovrate} is squared and $\mu/L$ in \eqref{eq:nesterovrate_2}
has a square root.
Comparing the convergence rates in Theorems \ref{thm:gradientmethod}
and \ref{thm:nesterov}, we see that the rates are linear but differ in the linear
rate, $Q^{-1}$ and $\sqrt{Q^{-1}}$, respectively.
For ill-conditioned problems, it is important whether the complexity is
 a function of $Q$ or $\sqrt{Q}$, see, e.g., \cite[\S 7.2.8]{Nemirovsky:1983}.
This motivates the interest in specialized optimal first-order methods
for solving ill-conditioned problems.

\section{First-Order Inequalities for the Gradient Map} \label{sec:inequalities}

For unconstrained convex problems the (norm of) the gradient is a measure of how
close we are to the minimum, through the first-order optimality condition,
cf.~\cite{BoVa:04}.
For constrained convex problems $\min_{x\in\Qset} f(x)$ there is a similar
quantity, namely, the \emph{gradient map} defined by
  \begin{align}
    G_\nu(x) = \nu \left (x - \PQ \left (x-\nu^{-1}\nabla f(x) \right) \right) .
  \end{align}
Here $\nu>0$ is a parameter and $\nu^{-1}$ can be interpreted as the step size
of a gradient step.
The function $\PQ$ is the Euclidean projection onto the convex
set $\Qset$ \cite{Ne:04}.
The gradient map is a generalization of the gradient to constrained problems
in the sense that if $\Qset = \mathbb{R}^\vmax$ then $G_\nu(x)=\nabla f(x)$,
and the equality $G_\nu(x^\star) = 0$ is a necessary and sufficient optimality
condition~\cite{Va:09}.
In what follows we review and derive some important first-order
inequalities which will be used to analyze the proposed algorithm.
We start with a rather technical result.
\begin{lemma}\label{lemma_gradient_map}
Let $f\in \FmuL$, fix $x\in \Qset$, $y\in \mathbb{R}^\vmax$,
and set $x^+ = \PQ(y-\bar L^{-1}\nabla f(y))$, where $\bar\mu$ and
$\bar L$ are related to $x,y$ and $x^+$ by the inequalities
  \begin{align}
  \label{eq:barmu}
    f(x) & \geq f(y) + \nabla f(y)^T(x-y) + \half \bar{\mu} \|x-y \|_2^2, \\[2mm]
  \label{eq:barL}
    f(x^+) & \leq f(y) + \nabla f(y)^T(x^+-y) + \half \bar{L} \|x^+-y \|_2^2 .
  \end{align}
Then
  \begin{equation}
    f(x^+)\leq f(x) + G_{\bar L}(y)^T(y-x) - \half \bar{L}^{-1}
    \|G_{\bar{L}}(y) \|_2^2 - \half \bar{\mu} \|y-x\|_2^2 .
  \end{equation}
\end{lemma}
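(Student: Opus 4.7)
The plan is to derive the bound by chaining together the two hypotheses \eqref{eq:barmu}, \eqref{eq:barL} and the variational characterization of the projection $\PQ$, with $G_{\bar L}(y)$ playing the role of a surrogate gradient. Throughout, I will use the identity $x^+ - y = -\bar L^{-1} G_{\bar L}(y)$, which follows directly from the definition of the gradient map.

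First I would apply \eqref{eq:barL} and substitute $x^+ - y = -\bar L^{-1} G_{\bar L}(y)$ to obtain
\begin{equation*}
f(x^+) \leq f(y) - \bar L^{-1}\nabla f(y)^T G_{\bar L}(y) + \tfrac{1}{2}\bar L^{-1}\|G_{\bar L}(y)\|_2^2.
\end{equation*}
Then I would use \eqref{eq:barmu}, rearranged as $f(y) \leq f(x) + \nabla f(y)^T(y-x) - \tfrac{1}{2}\bar\mu\|x-y\|_2^2$, to replace $f(y)$ on the right-hand side. This yields
\begin{equation*}
f(x^+) \leq f(x) + \nabla f(y)^T(y-x) - \bar L^{-1}\nabla f(y)^T G_{\bar L}(y) + \tfrac{1}{2}\bar L^{-1}\|G_{\bar L}(y)\|_2^2 - \tfrac{1}{2}\bar\mu\|x-y\|_2^2.
\end{equation*}
At this point all that is missing is to convert the two terms containing $\nabla f(y)$ into terms containing $G_{\bar L}(y)$.

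The key technical step — and the main obstacle — is to exploit the fact that $x^+$ is the projection of $y - \bar L^{-1}\nabla f(y)$ onto $\Qset$. The standard variational inequality for the Euclidean projection gives, for every $x\in \Qset$,
\begin{equation*}
\bigl(\, y - \bar L^{-1}\nabla f(y) - x^+\,\bigr)^T(x - x^+) \leq 0.
\end{equation*}
Substituting $y - x^+ = \bar L^{-1} G_{\bar L}(y)$ and $x - x^+ = (x-y) + \bar L^{-1} G_{\bar L}(y)$, then multiplying through by $\bar L$ and expanding, this is equivalent to
\begin{equation*}
\nabla f(y)^T(y-x) - \bar L^{-1}\nabla f(y)^T G_{\bar L}(y) \leq G_{\bar L}(y)^T(y-x) - \bar L^{-1}\|G_{\bar L}(y)\|_2^2.
\end{equation*}

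Finally I would plug this inequality into the bound for $f(x^+)$ obtained above. The term $-\bar L^{-1}\|G_{\bar L}(y)\|_2^2$ combines with the $+\tfrac{1}{2}\bar L^{-1}\|G_{\bar L}(y)\|_2^2$ from the smoothness step to leave exactly $-\tfrac{1}{2}\bar L^{-1}\|G_{\bar L}(y)\|_2^2$, producing the stated inequality
\begin{equation*}
f(x^+) \leq f(x) + G_{\bar L}(y)^T(y-x) - \tfrac{1}{2}\bar L^{-1}\|G_{\bar L}(y)\|_2^2 - \tfrac{1}{2}\bar\mu\|y-x\|_2^2.
\end{equation*}
The delicate point is keeping the signs of the projection inequality straight — the direction in which it tightens the gradient-based bound is exactly what turns $\nabla f(y)$ into $G_{\bar L}(y)$ and makes the coefficient of $\|G_{\bar L}(y)\|_2^2$ collapse to $-\tfrac{1}{2}\bar L^{-1}$.
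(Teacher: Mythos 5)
Your proof is correct. The paper itself does not spell out an argument: it simply cites Theorem 2.2.7 of Nesterov's book, which is the analogous statement for the gradient map with the global constants. What you have done is reconstruct that argument from scratch, adapted to the local constants $\bar\mu$ and $\bar L$: you use only the two pointwise inequalities \eqref{eq:barmu}--\eqref{eq:barL}, the identity $x^+-y=-\bar L^{-1}G_{\bar L}(y)$, and the variational characterization $\bigl(y-\bar L^{-1}\nabla f(y)-x^+\bigr)^T(x-x^+)\leq 0$ of the Euclidean projection (valid since $x\in\Qset$), and the bookkeeping in each step checks out: the projection inequality, after multiplying by $\bar L$ and expanding, converts the $\nabla f(y)$ terms into $G_{\bar L}(y)$ terms, and the $\|G_{\bar L}(y)\|_2^2$ coefficients combine to $-\half\bar L^{-1}$ exactly as you say. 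What your self-contained route buys, beyond independence from the reference, is an explicit verification of the point the paper only asserts in the discussion following the lemma: global membership $f\in\FmuL$ is never used in the proof, only the two inequalities at the specific triple $(x,y,x^+)$, which is precisely what justifies running the algorithm with locally estimated $\mu_k$ and backtracked $L_k$ in place of the unknown global parameters.
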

\begin{proof}
Follows directly from \cite[Theorem 2.2.7]{Ne:04}. \hfill $\Box$
\end{proof}

Note that if $f\in\FmuL$, then in Lemma \ref{lemma_gradient_map} we can always select
$\bar{\mu} = \mu$ and $\bar{L} = L$ to ensure that the inequalities
\eqref{eq:barmu} and \eqref{eq:barL} are satisfied. However, for
specific $x$, $y$ and $x^+$, there can exist $\bar \mu \geq \mu$ and $\bar L \leq L$
such that \eqref{eq:barmu} and \eqref{eq:barL} hold. We will use
these results to design an algorithm for unknown parameters $\mu$ and $L$.

The lemma can be used to obtain the following lemma.
The derivation of the bounds is inspired by similar results for
composite objective functions in~\cite{Ne:07}, and the second
result is similar to~\cite[Corollary 2.2.1]{Ne:04}.
\begin{lemma}\label{lemma_gradient_map_sup3}
Let $f\in \FmuL$, fix $y\in\mathbb{R}^\vmax$, and set
$x^+ = \PQ(y-\bar{L}^{-1} \nabla f(y))$.
Let $\bar{\mu}$ and $\bar{L}$ be selected in accordance with
\eqref{eq:barmu} and \eqref{eq:barL} respectively. Then
  \begin{equation}
  \label{eq:strong_convexity_gradient_map}
    \half \bar{\mu} \| y-x^\star \|_2 \leq \| G_{\bar L}(y) \|_2 .
  \end{equation}
If $y\in \Qset$ then
  \begin{equation}
  \label{eq:strong2}
    \half \bar{L}^{-1} \|G_{\bar L}(y) \|_2^2\leq f(y)-f(x^+)\leq f(y)-f^\star .
  \end{equation}
\end{lemma}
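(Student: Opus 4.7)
\noindent
The plan is to deduce both inequalities directly from Lemma \ref{lemma_gradient_map}, which bounds $f(x^+)$ in terms of $f(x)$, the gradient map $G_{\bar L}(y)$, and the distances involved. The two claims correspond to two natural choices of the free point $x \in \Qset$ in that lemma, namely $x=x^\star$ and $x=y$.

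For \eqref{eq:strong_convexity_gradient_map}, I would apply Lemma \ref{lemma_gradient_map} with $x = x^\star \in \Qset$, giving
\begin{equation*}
  f(x^+) \leq f(x^\star) + G_{\bar L}(y)^T(y-x^\star)
  - \half \bar L^{-1} \|G_{\bar L}(y)\|_2^2 - \half \bar\mu \|y-x^\star\|_2^2.
\end{equation*}
Since $x^+\in\Qset$ we have $f(x^+) \geq f^\star = f(x^\star)$, so the left-hand side can be dropped together with $f(x^\star)$ on the right to yield
\begin{equation*}
  \half \bar\mu \|y-x^\star\|_2^2 + \half \bar L^{-1} \|G_{\bar L}(y)\|_2^2
  \leq G_{\bar L}(y)^T(y-x^\star) \leq \|G_{\bar L}(y)\|_2 \, \|y-x^\star\|_2,
\end{equation*}
where the last step is Cauchy--Schwarz. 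Dropping the (nonnegative) second term on the left and dividing by $\|y-x^\star\|_2$ gives $\half\bar\mu\|y-x^\star\|_2 \leq \|G_{\bar L}(y)\|_2$ (and the inequality is trivial if $y=x^\star$).

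For \eqref{eq:strong2}, provided $y\in\Qset$ so that Lemma \ref{lemma_gradient_map} is applicable with $x = y$, the cross term and the $\|y-x\|_2^2$ term both vanish, leaving
\begin{equation*}
  f(x^+) \leq f(y) - \half \bar L^{-1} \|G_{\bar L}(y)\|_2^2,
\end{equation*}
which is the first inequality after rearrangement. The second inequality $f(y)-f(x^+) \leq f(y)-f^\star$ follows from $x^+\in\Qset$ and hence $f(x^+)\geq f^\star$.

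I do not anticipate any real obstacle here; the whole argument is essentially bookkeeping on top of Lemma \ref{lemma_gradient_map}. The only subtlety is that the lemma requires $x\in\Qset$, which is why the second bound needs the hypothesis $y\in\Qset$ whereas the first does not (since $x^\star\in\Qset$ automatically).
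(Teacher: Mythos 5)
Your argument is correct and coincides with the paper's own proof: inequality \eqref{eq:strong_convexity_gradient_map} is obtained from Lemma \ref{lemma_gradient_map} with $x=x^\star$, using $f(x^+)\geq f^\star$ and Cauchy--Schwarz, and \eqref{eq:strong2} from the same lemma with $x=y$ together with $f(x^+)\geq f^\star$. No gaps; the remark on why $y\in\Qset$ is needed only for the second bound matches the paper's usage as well.
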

\begin{proof}
From Lemma \ref{lemma_gradient_map} with $x=x^\star$ we use $f(x^+)\geq f^\star$ and obtain
  \[
    \half \bar{\mu} \|y-x^\star\|_2^2 \leq G_{\bar L}(y)^T(y-x^\star)
    - \half \bar{L}^{-1} \|G_{\bar L}(y) \|_2^2 \leq
    \|G_{\bar L}(y) \|_2 \|y-x^\star\|_2 ,
  \]
and \eqref{eq:strong_convexity_gradient_map} follows;
Eq.~\eqref{eq:strong2} follows from Lemma \ref{lemma_gradient_map} using
$y=x$ and $f^\star\leq f(x^+)$. \hfill $\Box$
\end{proof}

As mentioned in the beginning of the section, the results of the corollary say that we can relate the norm of the gradient map at $y$
to the error $\| y-x^* \|_2$ as well as to $f(y)-f^*$.
This motivates the use of the gradient map in a stopping criterion:
  \begin{align} \label{eq:stoppingcriterion}
    \|G_{\bar L}(y)\|_2 \leq \bar \epsilon ,
  \end{align}
where $y$ is the current iterate, and $\bar L$ is linked to this
iterate using \eqref{eq:barL}.
The parameter $\bar \epsilon$ is a user-specified tolerance based on the
requested accuracy.
Lemma \ref{lemma_gradient_map_sup3} is also used in the following
section to develop a restart criterion to ensure convergence.

\section{Nesterov's Method With Parameter Estimation} \label{sec:UPN}
The parameters $\mu$ and $L$ are explicitly needed in \algnameNesterov{},
but it is much too expensive to compute them explicitly; hence we need
a scheme to estimate them during the iterations.
To this end, we introduce the estimates $\mu_k$ and $L_k$ of $\mu$ and $L$
in each iteration~$k$.
We discuss first how to choose $L_k$, then $\mu_k$, and finally we state the
complete algorithm \algname{} and its convergence properties.

To ensure convergence, the main inequalities (\ref{eq:osc_ine1}) and
(\ref{eq:osc_ine2}) must be satisfied.
Hence, according to Lemma \ref{lemma_gradient_map} we need to choose
$L_k$ such that
  \begin{align}
    f(x^{(k+1)})\leq f(y^{(k)}) + \nabla f(y^{(k)})^T(x^{(k+1)}-y^{(k)}) +
    \half L_k \|x^{(k+1)}-y^{(k)} \|_2^2 .
  \end{align}
This is easily accomplished using \textit{backtracking} on $L_{k}$ \cite{beck:2009}.
The scheme, \algnameBT{}, takes the form given in Algorithm \ref{algo:BT}, where $\rho_L>1$ is an adjustment parameter.
\begin{algorithm}[tb]\normalsize
\SetKwInOut{Input}{input}\SetKwInOut{Output}{output}
\Input{$y,\bar{L}$}
\Output{$x,\tilde L$}
$\tilde L \leftarrow \bar{L}$ \;
$x \leftarrow \PQ \left( y-\tilde L^{-1} \nabla f(y) \right)$ \;
\While{$f(x)> f(y)+\nabla f(y)^T(x-y) + \half \tilde L \|x-y\|_2^2$}{
	$\tilde  L \leftarrow \rho_L \tilde L$ \;
	$x \leftarrow \PQ \left( y-\tilde L^{-1} \nabla f(y) \right)$ \;
} 
\caption{\algnameBT}\label{algo:BT}
\end{algorithm}
If the loop is executed $n_{\mathrm{BT}}$ times, the dominant computational
cost of \algnameBT{} is \hbox{$n_{\mathrm{BT}} + 2$} function evaluations and 1
gradient evaluation.

According to (\ref{eq:osc_ine2}) 
with Lemma \ref{lemma_gradient_map} and (\ref{eqn:per_iteration_relation}), we need to select the estimate
$\mu_k$ such that $\mu_k\leq \mu_k^\star$, where $\mu_k^\star$ satisfies
\begin{align}
    f(x^\star) \geq f(y^{(k)}) + \nabla f(y^{(k)})^T(x^\star-y^{(k)})
    + \half \mu_k^\star \|x^\star-y^{(k)} \|_2^2 .
\end{align}
However, this is not possible because $x^\star$ is, of course, unknown.
To handle this problem, we propose a \textit{heuristic} where we
select $\mu_k$ such that
\begin{align}
    f(x^{(k)}) \geq f(y^{(k)}) + \nabla f(y^{(k)})^T(x^{(k)}-y^{(k)})
    + \half \mu_k \|x^{(k)}-y^{(k)} \|_2^2 .
\end{align}
This is indeed possible since $x^{(k)}$ and $y^{(k)}$ are known iterates.
Furthermore, we want the estimate $\mu_k$ to be decreasing in order to
approach a better estimate of~$\mu$.
This can be achieved by the choice
\begin{align}
  \label{eq:mu_k_estimate}
    \mu_k = \min \{\mu_{k-1},M(x^{(k)},y^{(k)}) \} ,
\end{align}
where we have defined the function
\begin{align}
    M(x,y) = \left \{
    \begin{array}{lll}
      \frac{f(x)-f(y)-\nabla f(y)^T(x-y)}{\frac{1}{2}\|x-y
      \|_2^2}& \quad \text{if} &  x\neq y, \\[2mm]
      \infty & \quad \text{else.} &\end{array} \right.
\end{align}
In words, the heuristic chooses the largest $\mu_k$ that satisfies
\eqref{eq:defstronglyconvex} for $x^{(k)}$ and $y^{(k)}$, as long as $\mu_k$
is not larger than $\mu_{k-1}$.
The heuristic is simple and computationally inexpensive and we have found
that it is effective for determining a useful estimate.
Unfortunately, convergence of \algnameNesterov{} equipped with this heuristic
is not guaranteed, since the estimate can be too large.
To ensure convergence we include a restart procedure \algnameR{} that detects
if $\mu_k$ is too large, inspired by the approach in \cite[\S 5.3]{Ne:07} for composite objectives. \algnameR{} is given in Algorithm \ref{algo:RUPN}.

To analyze the restart strategy, assume that $\mu_i$ for all $i=1,\ldots,k$
are \emph{small enough}, \ie, they satisfy $\mu_i\leq\mu_i^\star$ for
$i=1,\ldots,k$, and $\mu_k$ satisfies
  \begin{align}
    f(x^{\star}) \geq f(x^{(0)}) + \nabla f(x^{(0)})^T(x^\star-x^{(0)}) +
    \half \mu_k \|x^\star-x^{(0)} \|_2^2 .
  \end{align}
When this holds we have the convergence result (using
\eqref{eqn:convergence})
  \begin{align}\label{eq:convergence_shifted_one}
    f(x^{(k+1)})-f^\star \leq
    \prod_{i=1}^{k} \left (\!1-\!\sqrt{\mu_i/L_i} \right)
    \Bigl( f(x^{(1)})-f^\star + \half \gamma_1 \|x^{(1)}-x^\star\|_2^2 \Bigr).
  \end{align}
We start from iteration $k=1$ for reasons which will presented
shortly (see Appendix~\ref{app:optimal_convergence_rate} for details and definitions).
If the algorithm uses a projected gradient step from the initial $x^{(0)}$
to obtain $x^{(1)}$, the rightmost factor of (\ref{eq:convergence_shifted_one})
can be bounded as
  \begin{eqnarray}
  \nonumber
      \lefteqn{ f(x^{(1)})-f^\star + \half \gamma_1 \|x^{(1)}-x^\star\|_2^2 } \\
  \nonumber
      & \leq & G_{L_{0}}(x^{(0)})^T (x^{(0)}\! -\! x^\star)-\half L_{0}^{-1}
      \|G_{L_{0}}(x^{(0)})\|_2^2 + \half \gamma_1 \|x^{(1)}\!-\!x^\star \|_2^2 \\
  \nonumber
      & \leq & \| G_{L_{0}}(x^{(0)}) \|_2 \| x^{(0)}\! -\! x^\star \|_2 -
      \half L_{0}^{-1} \| G_{L_{0}}(x^{(0)}) \|_2^2
      + \half \gamma_1 \|x^{(0)}\!-\!x^\star \|_2^2 \\
  \label{eq:restart_bound_gradient_projection_reduces}
     & \leq & \left( \frac{2}{\mu_k} - \frac{1}{2L_0} + \frac{2\gamma_1}{\mu_k^2} \right )
    \|G_{L_{0}}(x^{(0)})\|_2^2 .
  \end{eqnarray}
Here we used Lemma \ref{lemma_gradient_map}, and the fact that a projected gradient step
reduces the Euclidean distance to the solution \cite[Theorem 2.2.8]{Ne:04}.
Using Lemma \ref{lemma_gradient_map_sup3} we arrive at the bound
  \begin{equation}
  \label{eq:iteration_mu_sufficiently_small}
    \half \tilde L_{k+1}^{-1} \| G_{\tilde L_{k+1}}(x^{(k+1)}) \|_2^2 \leq
    \prod_{i=1}^{k} \left (\!1-\!\sqrt{\frac{\mu_i}{L_i}}\right)
    \left( \frac{2}{\mu_k}-\frac{1}{2 L_{0}}+\frac{2 \gamma_1}{\mu_k^2} \right )
    \|G_{L_{0}}(x^{(0)})\|_2^2 .
  \end{equation}
If the algorithm detects that \eqref{eq:iteration_mu_sufficiently_small}
is not satisfied, it can only be because there was at least one $\mu_i$
for $i=1,\ldots,k$ which was \emph{not small enough}.
If this is the case, we restart the algorithm with a new
$\bar{\mu}\leftarrow \rho_\mu \mu_k$, where $0<\rho_\mu<1$ is a
parameter, using the current iterate $x^{(k+1)}$ as initial vector.

\begin{algorithm}[tb]\normalsize
\SetKw{Abort}{abort}\SetKw{Restart}{restart}
$ \gamma_1=\theta_1(\theta_1L_1-\mu_1)/(1-\theta_1) $\;
\If{$\mu_k\neq0 ~\mathbf{and}~$\rm inequality \eqref{eq:iteration_mu_sufficiently_small} not satisfied
    }{
    \Abort{} execution of \algname{}\;
    \Restart{} \algname{} \rm with input $(x^{(k+1)},\ \rho_\mu \mu_k, \
L_k, \ \bar{\epsilon})$\;
    } 
\caption{\algnameR}\label{algo:RUPN}
\end{algorithm}

The complete algorithm \algname{} (Unknown-Parameter Nesterov) is given in Algorithm \ref{algo:UPN}.
\algname{} is based on Nesterov's optimal method where
we have included backtracking on $L_k$ and the
heuristic~(\ref{eq:mu_k_estimate}). An initial vector $x^{(0)}$ and
initial parameters $\bar{\mu}\geq\mu$ and $\bar{L}\leq L$ must be
specified along with the requested accuracy $\bar \epsilon$.
\begin{algorithm}[tb]\normalsize
\SetKwInOut{Input}{input}\SetKwInOut{Output}{output}\SetKw{Abort}{abort}
\Input{$x^{(0)},\bar \mu,\bar L,\bar \epsilon$}
\Output{$x^{(k+1)}$ or $\tilde x^{(k+1)}$ }
$[ x^{(1)},L_{0} ] \leftarrow \algnameBT{}(x^{(0)},\bar{L})$ \label{algoline:upn_bt1}\;
$\mu_0=\bar \mu, \quad y^{(1)} \leftarrow x^{(1)} , \quad \theta_1
\leftarrow \sqrt{\mu_0/L_0}$ \;
\For{$k = 1, 2, \dots$}{
	$[ x^{(k+1)},L_k ] \leftarrow \algnameBT{}(y^{(k)},L_{k-1})$ \;
	$[\tilde x^{(k+1)},\tilde{L}_{k+1} ] \leftarrow \algnameBT{}(x^{(k+1)},L_k)$ \;
	\lIf{$\| G_{\tilde{L}_{k+1}}(x^{(k+1)}) \|_2 \leq
    \bar{\epsilon}$}{
    \Abort, \Return $\tilde x^{(k+1)}$ \;
    \lIf{$\| G_{L_k}(y^{(k)}) \|_2 \leq \bar{\epsilon}$}{
    \Abort, \Return $x^{(k+1)}$} \;
    $\mu_k \leftarrow \min \bigl\{ \mu_{k-1}, M(x^{(k)},y^{(k)}) \bigr\}$ \;
    \algnameR{}\;
    $\theta_{k+1} \leftarrow \mathrm{positive}~\mathrm{root}~\mathrm{of}~
    \theta^2 = (1-\theta)\theta_k^2 + (\mu_k/L_k)\, \theta$ \;
    $\beta_k \leftarrow \theta_k(1-\theta_k)/(\theta_k^2 + \theta_{k+1})$ \;
    $y^{(k+1)} \leftarrow x^{(k+1)} + \beta_k (x^{(k+1)} - x^{(k)})$ \;
    }
} 
\caption{\algname{}}\label{algo:UPN}
\end{algorithm}
The changes from \algnameNesterov{} to \algname{} are at the following lines:
\begin{description}
\item[\bf 1:] Initial projected gradient step to obtain the bound
  (\ref{eq:restart_bound_gradient_projection_reduces}) and thereby the
  bound (\ref{eq:iteration_mu_sufficiently_small}) used for the
  restart criterion.
\item[\bf 5:] Extra projected gradient step explicitly applied to obtain
  the stopping criterion $\| G_{\tilde{L}_{k+1}}(x^{(k+1)}) \|_2 \leq
  \bar \epsilon$.
\item[\bf 6,7:] Used to relate the stopping criterion in terms of $\bar
  \epsilon$ to $\epsilon$, see Appendix~\ref{sec:total_complexity}.
\item[\bf 8:] The heuristic choice of $\mu_k$ in (\ref{eq:mu_k_estimate}).
\item[\bf 10:] The restart procedure for inadequate estimates of $\mu$.
\end{description}

We note that in a practical implementation, the computational work
involved in one iteration step of \algname{} may -- in the worst case
situation -- be twice that of one iteration of \algnameBB, due to the
two calls to \algnameBT{}.  However, it may be
possible to implement these two calls more efficiently than naively
calling \algnameBT{} twice. We will instead focus on the iteration
complexity of \algname{} given in the following theorem.

\begin{theorem} \label{theo:upn}
Algorithm \algname{}, applied to $f \in \FmuL$ under conditions
$\bar\mu\geq\mu$, $\bar L\leq L$, $\bar{\epsilon} = \sqrt{(\mu/2) \,\epsilon}$, stops using the gradient map magnitude measure and returns an $\epsilon$-suboptimal
solution with iteration complexity
  \begin{align}\label{eq:complexity_stop}
    \OC\Bigl( \sqrt{Q}\log Q \Bigr) + \OC\left(
    \sqrt{Q}\log \epsilon^{-1}  \right ) .
  \end{align}
\end{theorem}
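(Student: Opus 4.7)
The plan is to prove two things: (i) the stopping test $\|G_{\tilde L_{k+1}}(x^{(k+1)})\|_2 \leq \bar\epsilon$ with $\bar\epsilon^2 = \mu\epsilon/2$ certifies an $\epsilon$-suboptimal return, and (ii) the total iteration count before that test triggers is of the claimed order.

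For (i), I would invoke Lemma \ref{lemma_gradient_map} with $y = x^{(k+1)}$, $x = x^\star$, and $x^+ = \tilde x^{(k+1)}$, choosing $\bar\mu = \mu$ and $\bar L = \tilde L_{k+1}$; both choices are admissible because $f\in\FmuL$ and $\tilde L_{k+1}$ was accepted by \algnameBT{}. Dropping the nonpositive $-\half\bar L^{-1}\|G\|_2^2$ term and applying Cauchy-Schwarz yields
\begin{align*}
f(\tilde x^{(k+1)}) - f^\star \;\leq\; \|G\|_2 \,\|x^{(k+1)}-x^\star\|_2 - \half\mu\|x^{(k+1)}-x^\star\|_2^2 \;\leq\; \frac{\|G_{\tilde L_{k+1}}(x^{(k+1)})\|_2^2}{2\mu},
\end{align*}
so the stopping test forces this to be at most $\bar\epsilon^2/(2\mu) = \epsilon/4 < \epsilon$. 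The alternative exit returning $x^{(k+1)}$ is handled identically using $G_{L_k}(y^{(k)})$ together with the fact that $x^{(k+1)}$ plays the role of $x^+$ for $y = y^{(k)}$.

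For (ii), I would first bound the number of restarts. Each call to \algnameR{} replaces the strong-convexity guess by $\rho_\mu \mu_k$ with $\rho_\mu\in(0,1)$, and the heuristic \eqref{eq:mu_k_estimate} keeps $\mu_k$ nonincreasing within any phase; once a phase is entered with $\bar\mu\leq\mu$, every $\mu_i$ it generates satisfies \eqref{eq:barmu} globally, which is precisely the hypothesis under which \eqref{eq:iteration_mu_sufficiently_small} was derived, so no further restart fires. Since $\bar\mu$ starts below $L$ and shrinks by a factor of at least $\rho_\mu$ at each restart, at most $\lceil\log_{1/\rho_\mu}(L/\mu)\rceil = \OC(\log Q)$ restart events can occur. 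Next, inside any phase with current guess $\bar\mu^{(j)}$, as long as no restart has fired the contraction proved in Appendix~\ref{app:optimal_convergence_rate} gives $\prod_i(1-\sqrt{\mu_i/L_i})$ decay of the objective gap, and via \eqref{eq:strong2} the same rate for $\|G\|_2^2$; therefore the phase terminates within $\OC(\sqrt{L/\bar\mu^{(j)}}\log(\bar\epsilon^{-1}))$ iterations. In the final, successful phase one has $\bar\mu^{(j)}\geq\rho_\mu\mu$, yielding the $\OC(\sqrt Q\,\log\epsilon^{-1})$ contribution. Summing the $\OC(\log Q)$ failing phases (whose $\bar\mu^{(j)}$ form a geometric sequence, making the sum dominated by its last term) together with the observation $\log(\bar\epsilon^{-1}) = \OC(\log Q) + \OC(\log\epsilon^{-1})$ produces the extra $\OC(\sqrt Q\,\log Q)$ term.

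The hard part will be the per-phase bound: one must show that an overestimated $\mu_k$ is detected through violation of \eqref{eq:iteration_mu_sufficiently_small} within $\OC(\sqrt{L/\bar\mu^{(j)}}\log(\cdot))$ iterations rather than something larger, so that the geometric sum over all failing phases collapses to $\OC(\sqrt Q\,\log Q)$ and not $\OC(Q)$. The initial projected-gradient step on Line 1 of \algname{} and the explicit bound \eqref{eq:restart_bound_gradient_projection_reduces} are precisely what furnish a usable left-hand side for the restart test; combining that with the optimistic contraction of the right-hand side is the technical crux, and I would defer the remaining bookkeeping (relating $\bar\epsilon$ to $\epsilon$ and absorbing constants) to Appendix \ref{sec:total_complexity}.
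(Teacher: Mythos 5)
Your plan follows essentially the same route as the paper's Appendix~\ref{sec:complexity}: part (i) is the paper's Section~\ref{sec:total_complexity} translation of the gradient-map stopping test into $\epsilon$-suboptimality (the paper gets the looser constant $(2/\mu)\bar\epsilon^2=\epsilon$ via Lemmas \ref{lemma_gradient_map} and \ref{lemma_gradient_map_sup3}, your maximization over $\|x^{(k+1)}-x^\star\|_2$ gives $\epsilon/4$, either is fine), and part (ii) matches the paper's split into a restart count $R=\OC(\log Q)$, a per-phase bound, and a geometric sum over phases collapsing to $\OC(\sqrt{Q}\log Q)+\OC(\sqrt{Q}\log\epsilon^{-1})$. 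One justification in your phase argument is not valid as stated: inside a \emph{failing} phase you cannot invoke the Appendix~\ref{app:optimal_convergence_rate} contraction of the objective gap ``as long as no restart has fired,'' because that contraction (\eqref{eqn:per_iteration_relation}--\eqref{eqn:convergence}) requires $\mu_i\leq\mu_i^\star$, which is exactly what may fail there; the restart test holding only tells you that $\|G_{\tilde L_{k+1}}(x^{(k+1)})\|_2^2$ lies below the geometrically decaying right-hand side of \eqref{eq:iteration_mu_sufficiently_small}, not that the true gap contracts. Fortunately the ``crux'' you then defer is precisely the repair and is what the paper executes: since the stopping check precedes the restart check, non-termination forces $\|G_{\tilde L_{k+1}}(x^{(k+1)})\|_2\geq\bar\epsilon$, and pitting this lower bound against the decay rate $(1-\sqrt{\mu_{1,r}/L_{1,r}})^k$ of the right-hand side yields \eqref{eq:restart_complexity_k_greater}, i.e.\ each overestimating phase either terminates (still certified by part (i)) or triggers \algnameR{} within $\OC\bigl(\sqrt{Q_r}(\log Q_r+\log\bar\epsilon^{-1})\bigr)$ iterations with $Q_r=\OC(\rho_\mu^{R-r}Q)$, after which the geometric summation and the substitution $\log\bar\epsilon^{-1}=\OC(\log Q)+\OC(\log\epsilon^{-1})$ proceed exactly as you outline. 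So the approach is the paper's; to be complete you would need to carry out that deferred per-phase computation (the paper's Sections \ref{app:termination_complexity} and the restart-complexity subsection) rather than rely on the objective-gap contraction in failing phases.
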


\begin{proof}
See Appendix \ref{sec:complexity}.
\hfill $\Box$
\end{proof}

The term $\OC\left( \sqrt{Q}\log Q \right)$ in (\ref{eq:complexity_stop}) follows
from application of several inequalities involving the problem dependent parameters
$\mu$ and $L$ to obtain the overall bound (\ref{eq:iteration_mu_sufficiently_small}).
Algorithm \algname{} is suboptimal since the optimal complexity is
$\OC \left( \sqrt{Q}\log \epsilon^{-1} \right)$ but it has the advantage that
it can be applied to problems with unknown $\mu$ and $L$.
%

\section{The 3D Tomography Test Problem} \label{sec:tomo}

Tomography problems arise in numerous areas, such as medical imaging,
non-destruc\-tive testing, materials science, and geophysics
\cite{Herman,KaSl01,Nolet}.
These problems amount to reconstructing an object from its projections
along a number of specified directions, and these projections are produced
by X-rays, seismic waves, or other ``rays'' penetrating the object
in such a way that their intensity is partially absorbed by the object.
The absorbtion thus gives information about the object.

The following generic model accounts for several applications of tomography.
We consider an object in 3D with linear attenuation coefficient $\Xfun(t)$, with
$t\in\mathrm{\Omega}\subset\mathbb{R}^3$.
The intensity decay $b_i$ of a ray along the line $\ell_i$ through
$\mathrm{\Omega}$ is governed by a line integral,
  \begin{equation}
    b_i = \log (I_0/I_i) = \int_{\ell_i} \Xfun(t) \, d\ell = b_i,
  \end{equation}
where $I_0$ and $I_i$ are the intensities of the ray before and after passing
through the object.
When a large number of these line integrals are recorded, then we are able to
reconstruct an approximation of the function~$\Xfun(t)$.

We discretize the problem as described in Section~\ref{sec:TVproblem}, such
that $\Xfun$ is approximated by a piecewise constant function in each voxel
in the domain $\mathrm{\Omega} = [0,1] \times [0,1] \times [0,1]$.
Then the line integral along $\ell_i$ is computed by summing the contributions
from all the voxels penetrated by $\ell_i$.
If the path length of the $i$th ray through the $j$th voxel is denoted
by~$a_{ij}$, then we obtain the linear equations
  \begin{equation}
    \sum_{j=1}^{\vmax} a_{ij} x_j = b_i, \qquad i = 1,\dots, \rmax ,
  \end{equation}
where $\rmax$ is the number of rays or measurements and $\vmax$ is the number of voxels.
This is a linear system of equations~$A\, x = b$ with a
sparse coefficient matrix $A\in\mathbb{R}^{\rmax\times \vmax}$.

A widely used test image in medical tomography is the ``Shepp-Logan phantom,''
which consists of a number superimposed ellipses.
In the MATLAB function \texttt{shepplogan3d} \cite{shepplogan3d}
this 2D image is generalized to 3D by superimposing ellipsoids instead.
The voxels are in the range $[0,1]$, and \fig \ref{fig:phantom3d} shows
an example with $43 \times 43 \times 43$ voxels.

\begin{figure}
\begin{center}
\ifpdf
	\includegraphics[width=0.3\textwidth]{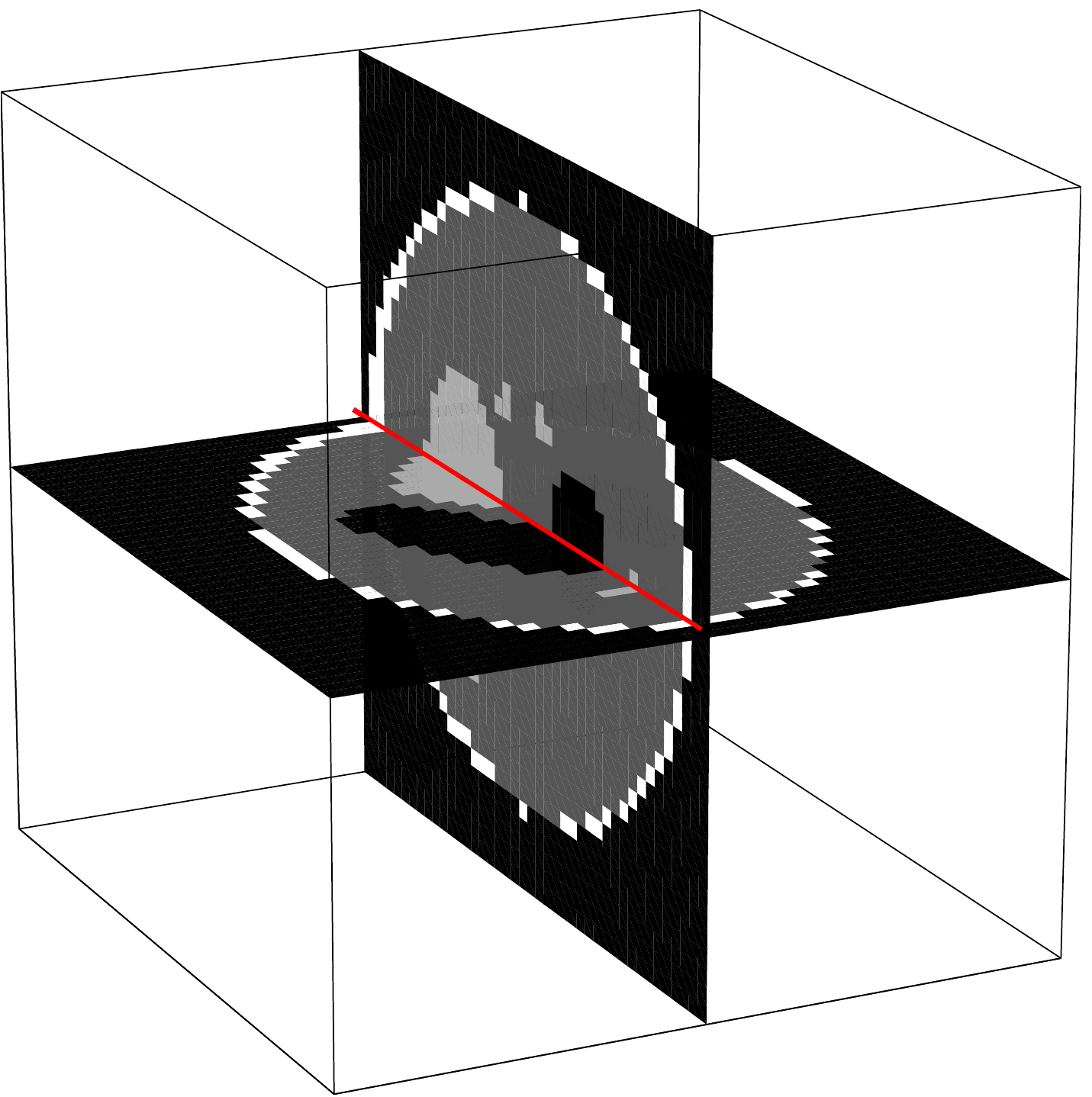}
	\hspace{0.03\textwidth}
	\includegraphics[width=0.3\textwidth]{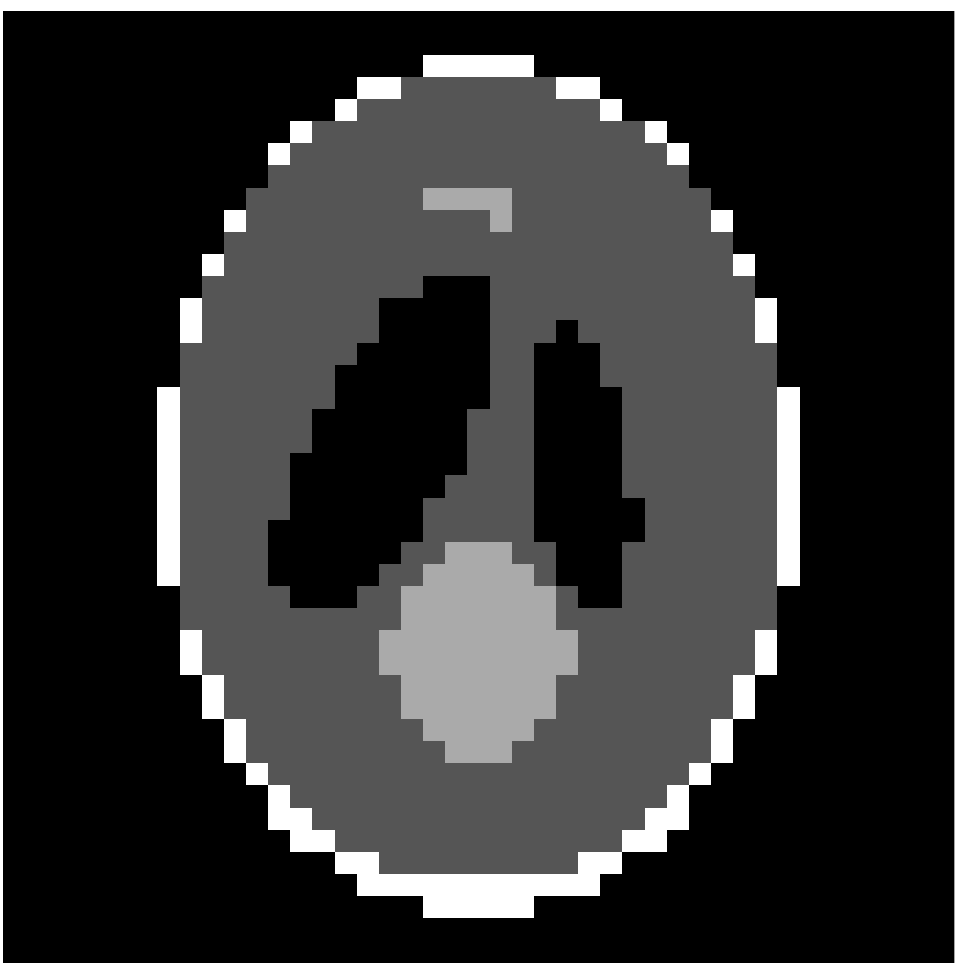}
	\hspace{0.03\textwidth}
	\includegraphics[width=0.3\textwidth]{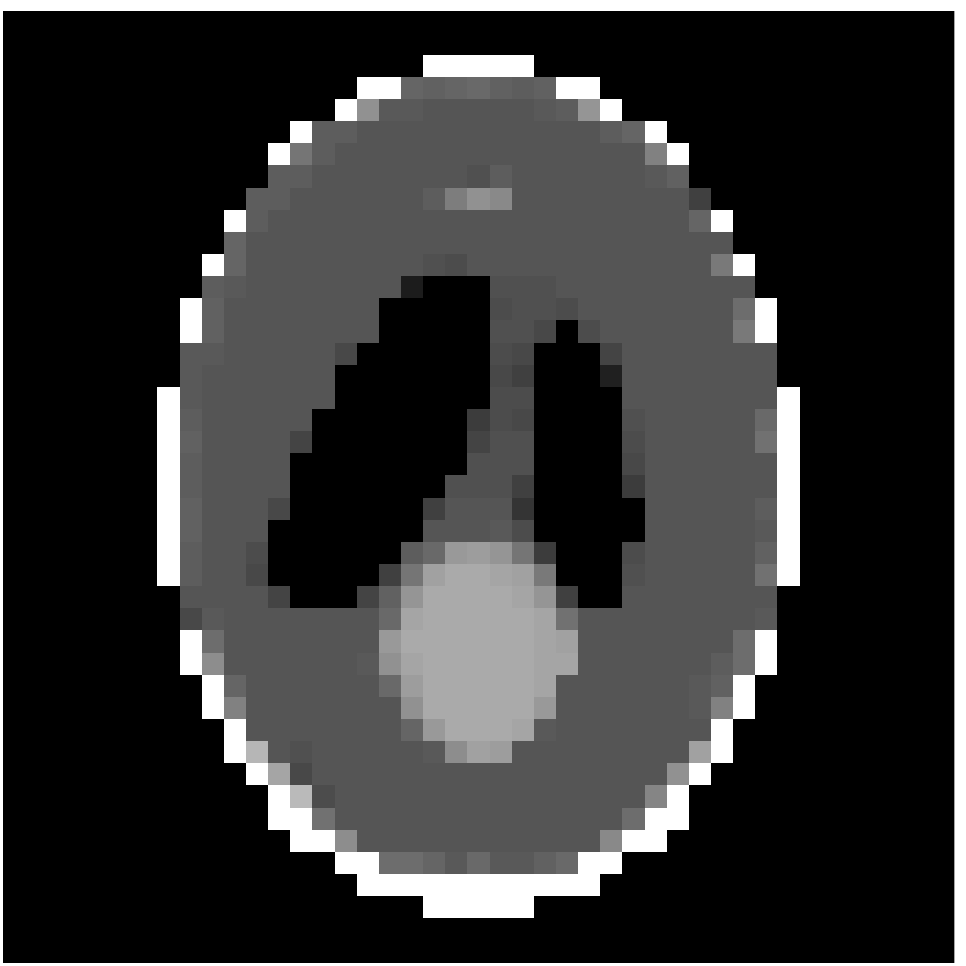}
\else
	\includegraphics[width=0.3\textwidth]{single_rec_paper_3dbw.eps}
	\hspace{0.03\textwidth}
	\includegraphics[width=0.3\textwidth]{single_rec_paper_orig.eps}
	\hspace{0.03\textwidth}
	\includegraphics[width=0.3\textwidth]{single_rec_paper_rec.eps}
\fi
\end{center}
\caption{Left: Two orthogonal slices through the 3D Shepp-Logan phantom
discretized on a $43^3$ grid used in our test problems. Middle: Central horizontal slice.  Right: Example of solution for $\alpha=1$ and $\tau = 10^{-4}$. A less smooth solution can be obtained using a smaller $\alpha$. Original voxel/pixel values are $0.0$, $0.2$, $0.3$ and $1.0$. Color range in display is set to $[0.1,0.4]$ for better constrast.}
\label{fig:phantom3d}
\end{figure}


We construct the matrix $A$ for a parallel-beam geometry with orthogonal projections
of the object along directions well distributed over the unit sphere,
in order to obtain views of the object that are as independent as possible.
The projection directions are the direction vectors of so-called
\emph{Lebedev quadrature} points on the unit sphere, and the directions are
evenly distributed over the sphere;
we use the MATLAB implementation \texttt{getLebedevSphere}~\cite{lebedevquad}.
For setting up the tomography system matrix for a parallel beam geometry, we use 
the Matlab implementation \texttt{tomobox}~\cite{tomobox}.

\section{Numerical Experiments} \label{sec:numerical}
This section documents numerical experiments with the four methods \algname{}, \algnameUPNz{}, \algnameGP{} and \algnameBB{} applied to the TV regularization problem \eqref{eq:discreteTV}.
We use the two test problems listed in Table \ref{tab:tesstcases},
which are representative across a larger class of problems (other
directions, number of projections, noise levels, etc.) that we have
run simulations with.  The smallest
eigenvalue of $A^TA$ for \Tover{} is $2.19\cdot 10^{-5}$ (as computed by
\textsc{Matlab}'s \texttt{eigs}), confirming that $\rank(A)=N$ for \Tover{}. We emphasize
that this computation is only conducted to support the analysis of the
considered problems since -- as we have argued in the introduction -- it
carries a considerable computational burden to compute. In all
simulations we create noisy data from an exact object $x_\mathrm{exact}$ through the forward mapping $b = A x_\mathrm{exact} + e$, subject to additive Gaussian white noise of relative noise level $\|e\|_2/\|b\|_2 = 0.01$. As
initial vector for the TV algorithms we use the fifth iteration of the
iterative conjugate gradient method applied to the least squares problem.

\begin{table}[htbp]
\vspace{-0.5cm}
\caption{Specifications of the two test problems; the object domain consists
of $m \times n \times l$ voxels and each projection is a $p\times p$ image.
Any zero rows have been purged from~$A$. }
\centering
\begin{tabular}{c|c|c|c|c|c}
\hline
Problem & $m=n=l$ & $p$ & projections & dimensions of $A$ & rank\\ \hline
\Tover{} & 43 & 63 & 37 & $ 99361\times 79507$ & $= 79507$ \\
\Tunder{} & 43 & 63 & 13 & $ 33937\times 79507$ & $< 79507$ \\
 \hline
\end{tabular}
\label{tab:tesstcases}
\vspace{-0.2cm}
\end{table}

To investigate the convergence of the methods, we need the true minimizer
$x^\star$ with $\phi(x^\star) = \phi^\star$, which is unknown for the
test problem.
However, for comparison it is enough to use a reference solution much closer to the true minimizer than the iterates.
Thus, to compare the accuracy of the solutions obtained with the accuracy
parameter $\bar \epsilon$, we use a reference solution computed with
accuracy $(\bar \epsilon\cdot 10^{-4})$, and with abuse of
notation we use $x^\star$ to denote this reference solution.

We compare the algorithm \algname\ with \algnameGP\ (the gradient
projection method \eqref{eq:gradientmethod} with backtracking line search on the step size), \algnameBB\ and \algnameUPNz.
The latter is \algname\ with $\mu_i=0$ for all $i=0,\cdots,k$ and
$\theta_1=1$. The algorithm \algnameUPNz\ is optimal for the class $\FzL$ and
can be seen as an instance of the more general accelerated/fast proximal
gradient algorithm (FISTA) with backtracking and the non-smooth term being the
indicator function for the set $\Qset$, see \cite{beck:2009,beck:2009b,Ne:07} and the overview in \cite{Tse:08}. 

\subsection{Influence of $\alpha$ and $\tau$ on the convergence}
For a given $A$ the theoretical modulus of strong convexity given in
\eqref{eq:Qf} varies only with $\alpha$ and $\tau$. We therefore expect
better convergence rates \eqref{eq:gradientrate1} and
\eqref{eq:nesterovrate_2} for smaller $\alpha$ and larger $\tau$. In
\fig \ref{fig:alphatau} we show the convergence histories for
\Tover{} with all combinations of $\alpha = 0.01$, $0.1$, $1$ and $\tau = 10^{-2}$, $10^{-4}$,  $10^{-6}$.

\begin{figure}[tbp]
\ifpdf
    \includegraphics[width=1\textwidth]{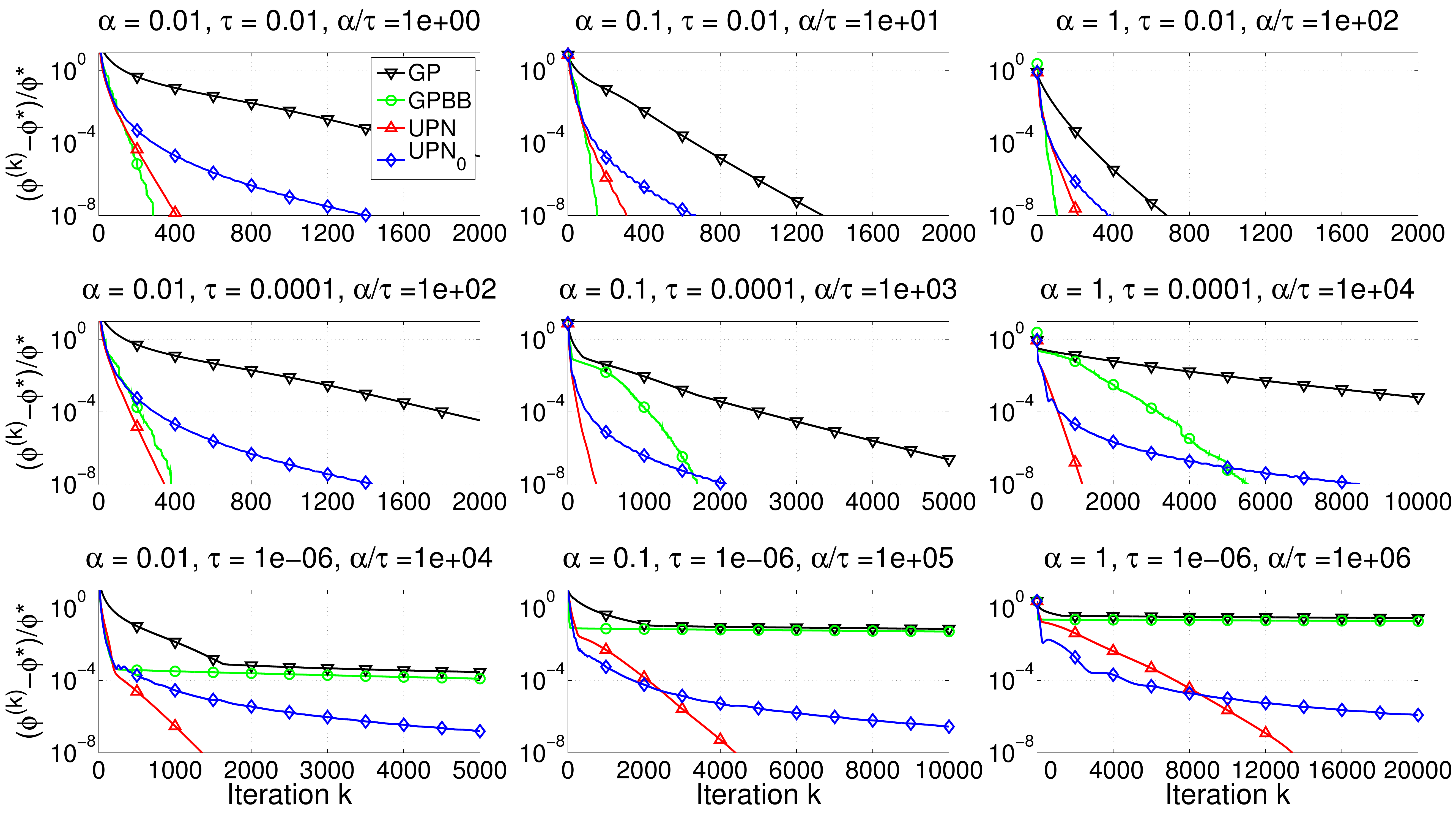}
\else
    \includegraphics[width=1\textwidth]{test001_con_conv.eps}
\fi
\caption{Convergence histories $(\phi(x^{(k)})-\phi^\star)/\phi^\star$ vs. $k$ for \Tover{} with $\alpha = 0.01$, $0.1$ and $1$ and $\tau = 10^{-2}$, $10^{-4}$ and $10^{-6}$.} \label{fig:alphatau}
\end{figure}

For low $\alpha/\tau$ ratios, i.e., small condition number of the
Hessian, \algnameBB{} and \algnameGP{} requires a comparable or smaller
number of iterations than \algname{} and \algnameUPNz{}. As
$\alpha/\tau$ increases, both \algnameBB{} and
\algnameGP{} exhibit slower convergence, while \algname{} is less affected. In
all cases \algname{} shows linear convergence, at least in the final
stage, while \algnameUPNz{} shows sublinear convergence. Due to these
observations, we consistently observe that for sufficiently high
accuracy, \algname{} requires the lowest number of iterations. This
also follows from the theory since \algname{} scales as $\OC ( \log
\epsilon^{-1})$, whereas \algnameUPNz{} scales at a
higher complexity of $\OC (\sqrt{\epsilon^{-1}})$.

We conclude that for small condition numbers there is no gain in using \algname{} compared to \algnameBB{}. For larger condition numbers, and in particular if a high-accuracy solution is required, \algname{} converges significantly faster. Assume that we were to choose only one of the four algorithms to use for
reconstruction across the condition number range. When \algname{}
requires the lowest number of iterations, it requires \emph{signi\-ficantly}
fewer, and when not, \algname\ only requires slightly more iterations
than the best of the other algorithms. Therefore, \algname{} appears to be the best
choice. Obviously, the choice of algorithm also depends on the demanded accuracy of the
solution. If only a low accuracy, say
$(\phi^{(k)}-\phi^\star)/\phi^\star = 10^{-2}$ is sufficient, all four
methods perform more or less equally well.

\subsection{Restarts and $\mu_k$ and $L_k$ histories}
To ensure convergence of \algname{} we introduced the restart
functionality \algnameR{}. In practice, we almost never observe a
restart, e.g., in none of the experiments reported so far a restart
occurred. An example where restarts do occur is obtained if we
increase $\alpha$ to $100$ for \Tover{} (still $\tau =
10^{-4}$). Restarts occur in the first $8$ iterations, and each time
$\mu_k$ is reduced by a constant factor of $\rho_\mu=0.7$. In \fig
\ref{fig:restart}, left, the $\mu_k$ and $L_k$ histories are plotted
vs. $k$ and the restarts are seen in the zoomed inset as the rapid,
constant decrease in $\mu_k$.
\begin{figure}[tbp]
\ifpdf
    \includegraphics[width=0.51\textwidth]{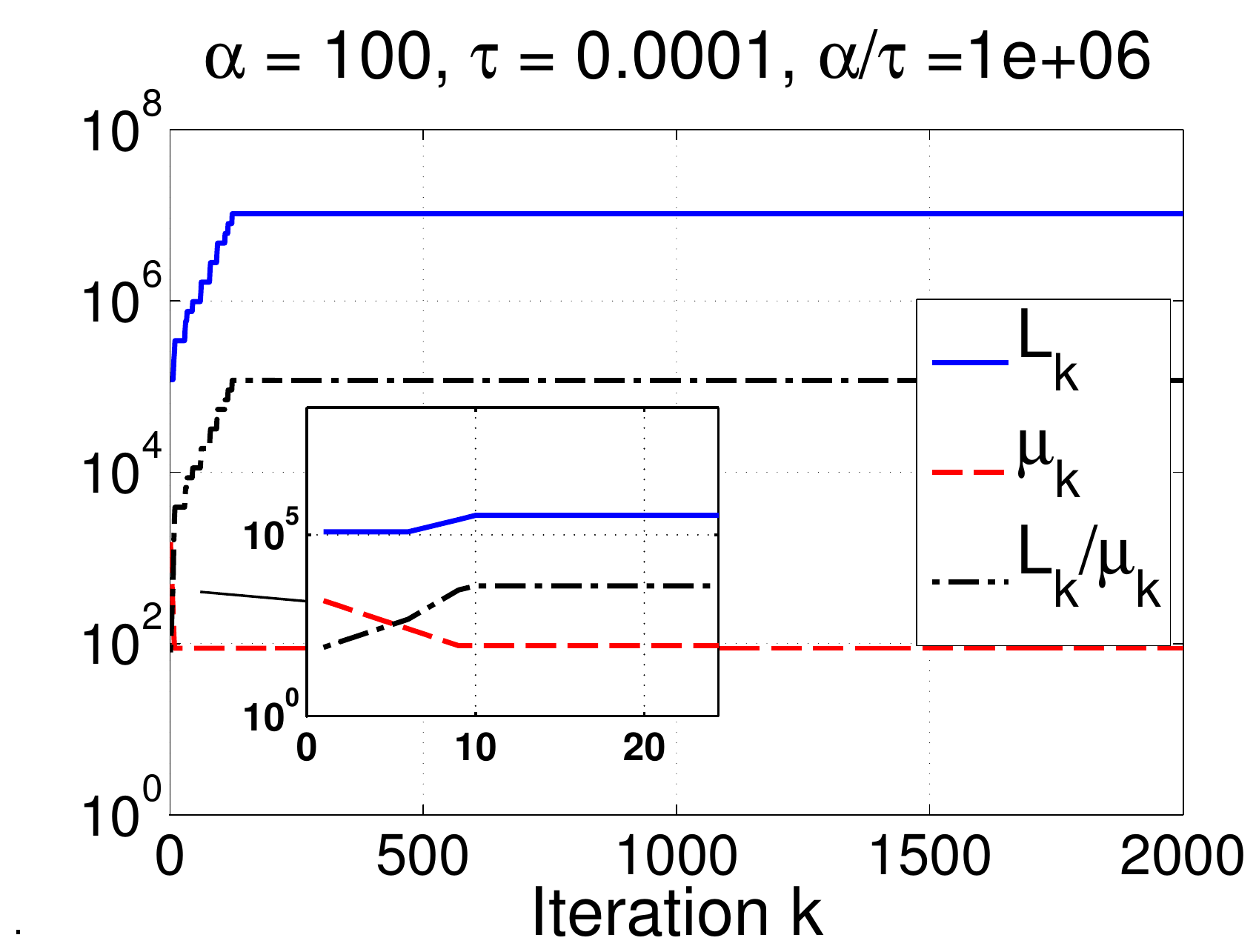}
    \includegraphics[width=0.48\textwidth]{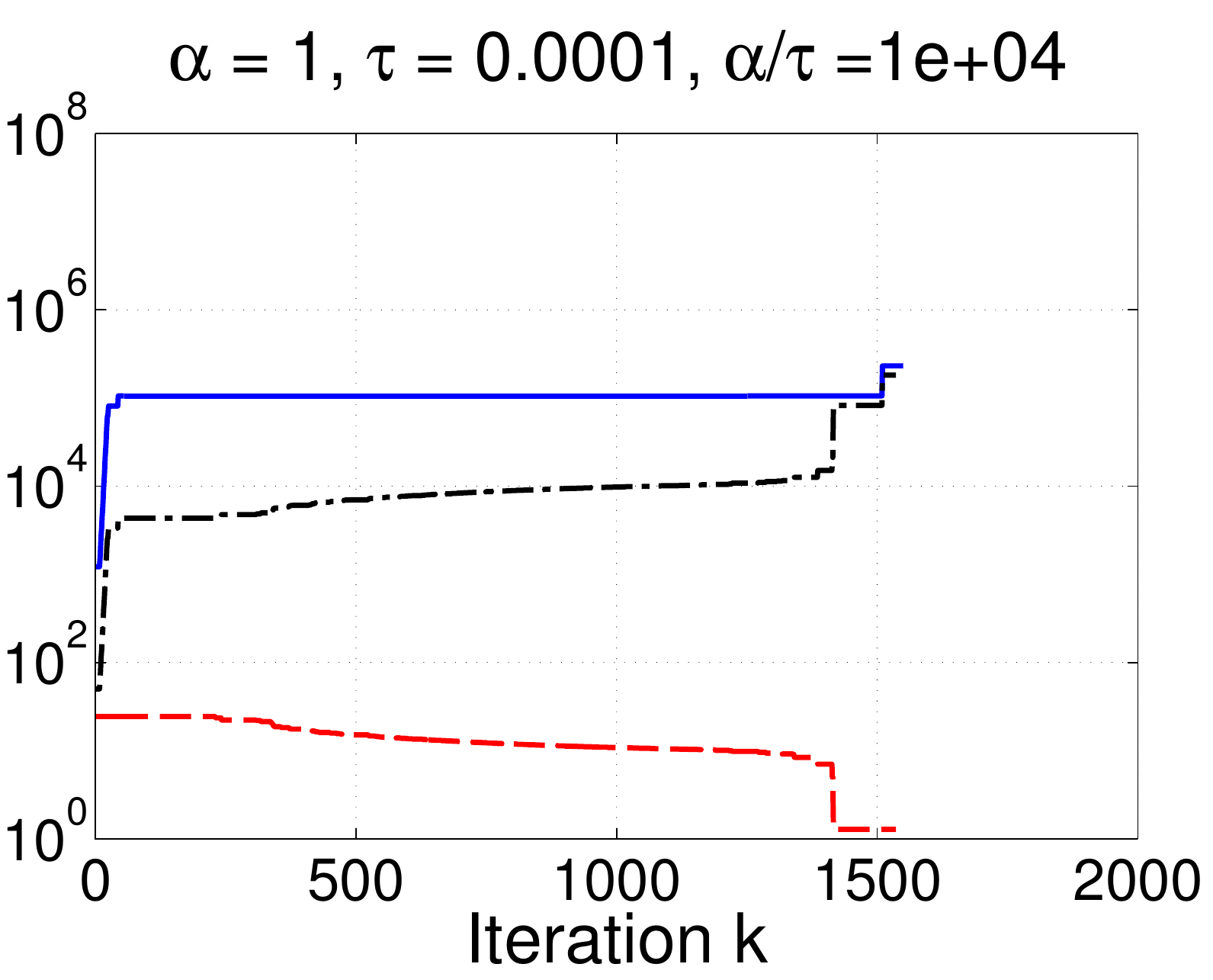}
\else
    \includegraphics[width=0.51\textwidth]{restart_inset.eps}
    \includegraphics[width=0.48\textwidth]{mukLk_001_single.eps}
\fi
\caption{The $\mu_k$, $L_k$ histories for \Tover{}. Left: $\alpha = 100$ and $\tau = 10^{-4}$. Right: $\alpha = 1$ and $\tau = 10^{-4}$.} \label{fig:restart}
\end{figure}
From the plot we also note that after the decrease in $\mu_k$ and an
initial increase in $L_k$, both estimates are constant for the
remaining iterations, indicating that the heuristics determines
sufficient values. 

For comparison the $\mu_k$ and $L_k$ histories for
\Tover{} with $\alpha = 1$ and $\tau = 10^{-4}$ are seen in \fig
\ref{fig:restart}, right. No restarts occurred here, and $\mu_k$ decays
gradually, except for one final jump, while $L_k$ remains almost constant.

\subsection{A non-strongly convex example}
Test problem \Tunder{} corresponds to only 13 projections, which
causes $A$ to not have full column rank. This leads to
$\lambda_\mathrm{min}(A^T A) = 0$, and hence $\phi(x)$ is not strongly
convex. The optimal convergence rate is therefore given by 
\eqref{eq:nesterovrate}; but how does the lack of strong convexity
affect \algname{}, which was specifically constructed for strongly
convex problems? \algname{} does not recognize that the problem is not
strongly convex but simply relies on the heuristic
\eqref{eq:mu_k_estimate} at the $k$th iteration. We investigate the
convergence by solving \Tunder{} with $\alpha=1$ and $\tau=10^{-4}$. Convergence histories are given in \fig \ref{fig:underdetermined_true_mu_L}, left.
%
The algorithm \algname{} still converges linearly, although slightly slower than in the \Tover{} experiment
($\alpha=1,\tau=10^{-4}$) in \fig \ref{fig:alphatau}. The
algorithms \algnameGP{} and \algnameBB{} converge much more slowly,
while at low accuracies \algnameUPNz{} is comparable to \algname. But the linear convergence
makes \algname{} converge faster for high accuracy solutions.

\subsection{Influence of the heuristic}
An obvious question is how the use of the heuristic for estimating
$\mu$ affects \algname{} compared to \algnameNesterov{}, where
$\mu$ (and $L$) are assumed known. From Theorem \ref{thm:Qtight} we can
compute a strong convexity parameter and a Lipschitz parameter for
$\phi(x)$ assuming we know the largest and smallest magnitude
eigenvalues of $A^TA$. Recall that these $\mu$ and $L$ are not necessarily the tightest possible, according to Remark \ref{remark:nontight}. For \Tover{} we have computed
$\lambda_\mathrm{max}(A^TA) = 1.52\cdot 10^3$ and
$\lambda_\mathrm{min}(A^T A) = 2.19\cdot 10^{-5}$ (by means of \texttt{eigs} in \textsc{Matlab}). Using $\alpha = 1$, $\tau = 10^{-4}$ and $\|D\|_2^2 \leq 12$ from Lemma \ref{lem:TVmuL} we take
\begin{align*}
 \mu = \lambda_\mathrm{min}(A^T A) = 2.19\cdot 10^{-5}, \qquad 
 L = \lambda_\mathrm{max}(A^T A)  + 12 \frac{\alpha}{\tau} = 1.22 \cdot 10^5,
\end{align*}
and solve test problem \Tover{} using \algname{} with the heuristics switched off in favor of these \emph{true} strong convexity and Lipschitz parameters. Convergence histories are plotted in \hbox{\fig \ref{fig:underdetermined_true_mu_L}, right}.
\begin{figure}[tbp]
\centering
\ifpdf
    \includegraphics[width=0.49\textwidth]{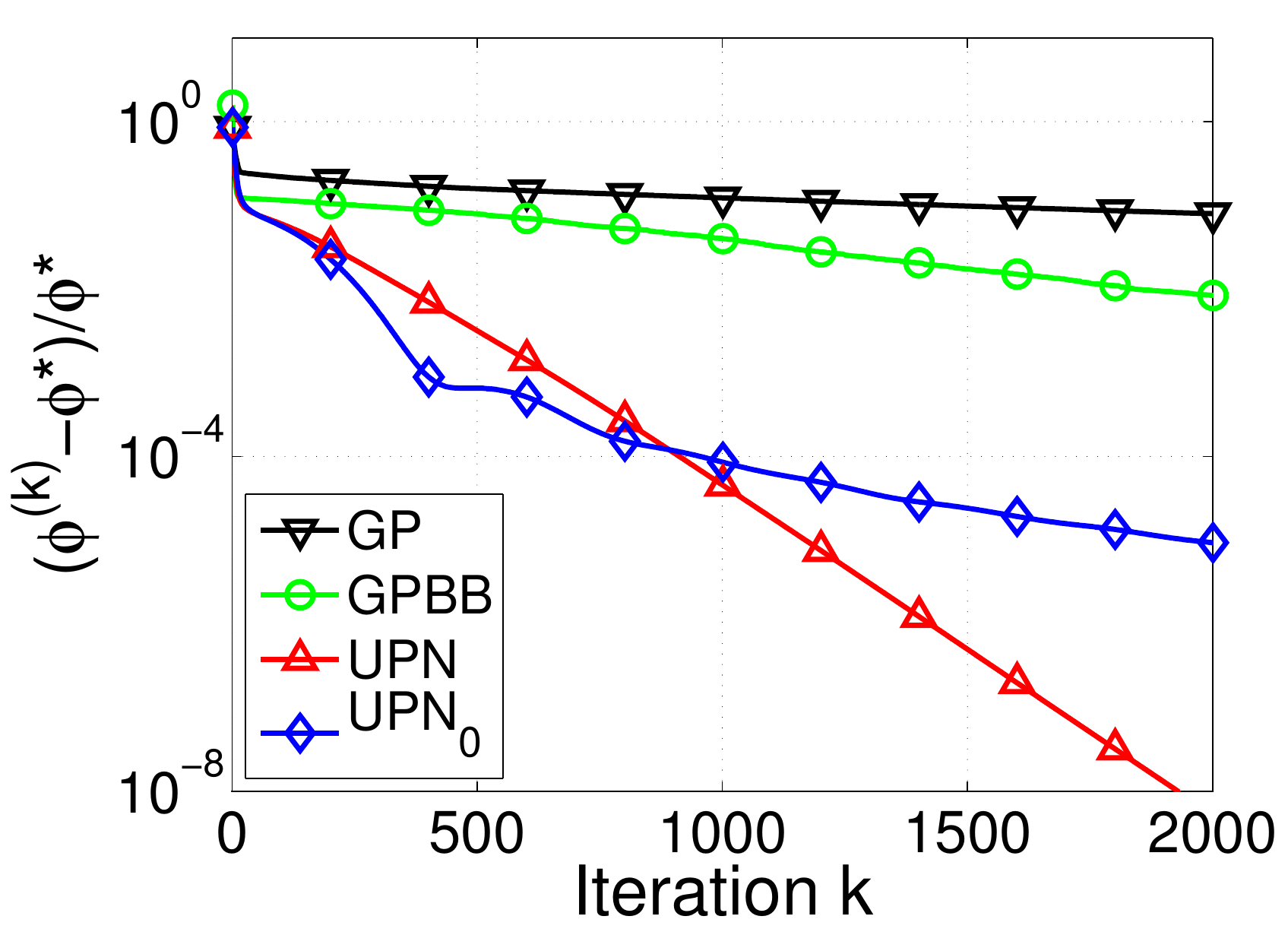}
    \includegraphics[width=0.49\textwidth]{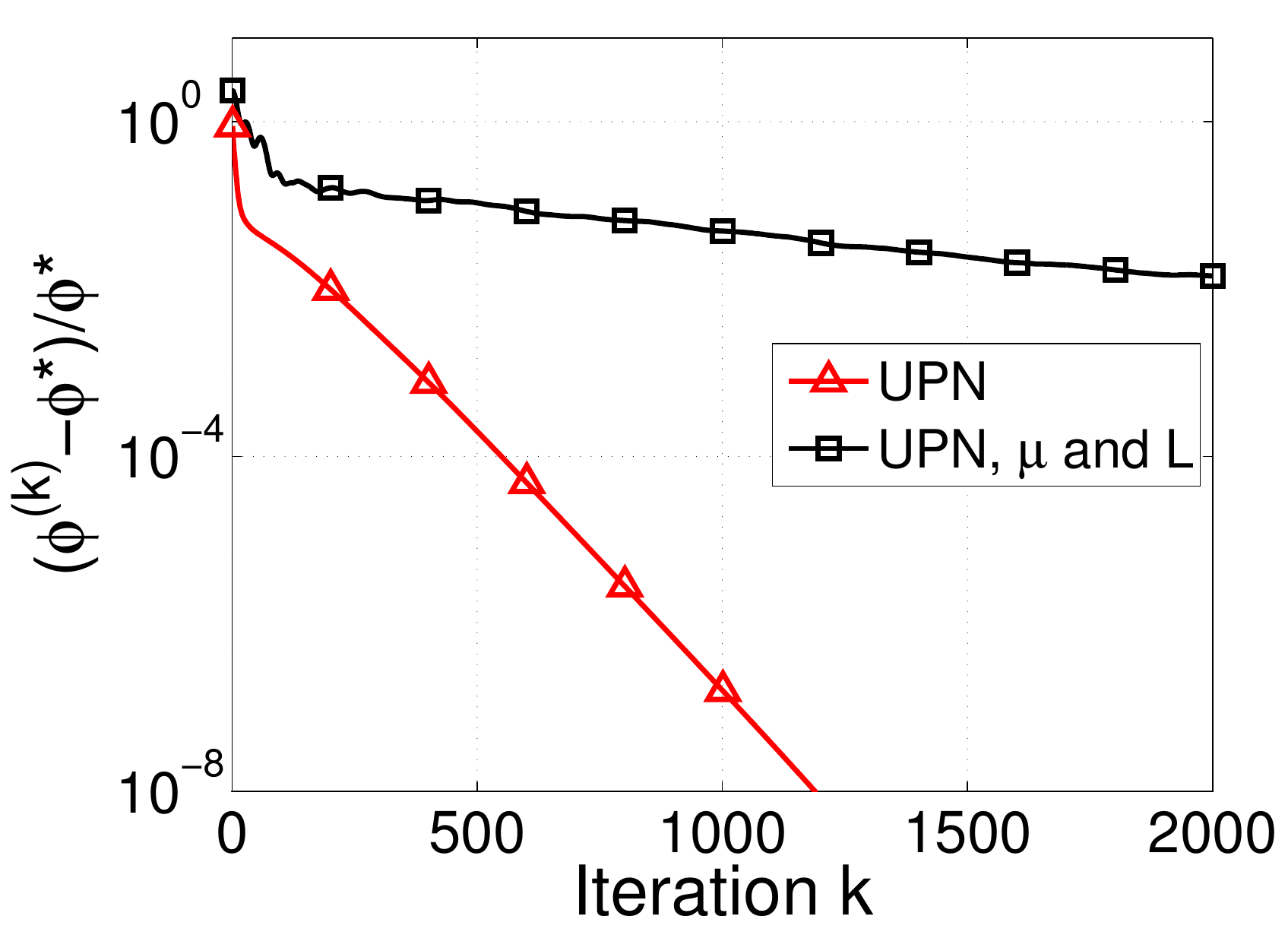}
\else
    \includegraphics[width=0.49\textwidth]{alpha1tau1e-4numproj13_conv.eps}
    \includegraphics[width=0.49\textwidth]{true_mu_L.eps}
\fi
\caption{Left: Convergence histories of \algnameGP{}, \algnameBB{}, \algname{} and \algnameUPNz{} on \Tunder{} with $\alpha = 1$ and $\tau= 10^{-4}$. Right: Convergence histories of \algname{} and \algname{} using true $\mu$ and $L$ on \Tover{} with $\alpha = 1$ and $\tau = 10^{-4}$.} \label{fig:underdetermined_true_mu_L}
\end{figure}
%

The convergence is much slower than using \algname{} with the heuristics switched on. We ascribe this behavior to the very large modulus of strong
convexity that arise from the true $\mu$ and $L$. It appears that \algname{} works
better than the actual degree of strong convexity as measured by
$\mu$, by heuristically choosing in each step a $\mu_k$ that is sufficient
\emph{locally} instead of being restricted to using a \emph{globally}
\hbox{valid $\mu$.}

\section{Conclusion} \label{sec:conclusion}
We presented an implementation of an optimal first-order optimization
algorithm for large-scale problems, suited for functions that are
smooth and strongly convex.
While the underlying algorithm by Nesterov depends on knowledge of two
parameters that characterize the smoothness and strong convexity, we have
implemented methods that estimate these parameters during the
iterations, thus making the algorithm of practical use.

We tested the performance of the algorithm and compared it with two variants of the gradient projection algorithm and a variant of the FISTA algorithm. We applied the algorithms to total variation-regularized tomographic reconstruction of a generic threedimensional test problem.
The tests show that, with regards to the number of ite\-rations, the
proposed algorithm is competitive with other first-order algorithms,
and superior for difficult problems, i.e., ill-conditioned problems solved
to high accuracy. Simulations also show that even for problems that
are not strongly convex, in practice we achieve the favorable
convergence rate associated with strong complexity. The software is
available as a C-implementation with an interface to MATLAB from
\url{www2.imm.dtu.dk/~pch/TVReg/}.


\appendix

\section{The Optimal Convergence Rate} \label{app:optimal_convergence_rate}

Here we provide an analysis of an optimal method for smooth, strongly
convex functions without the use of estimation functions as in~\cite{Ne:04}.
This approach is similar to the analysis of optimal methods for smooth functions
in \cite{Tse:08,Va:09}.
The motivation for the following derivations is to introduce the iteration
dependent $L_k$ and $\mu_k$ estimates of $L$ and $\mu$.
This will support the analysis of how $L_k$ and $\mu_k$ should be selected.
We start with the following relations to the ``hidden'' supporting variables
$z^{(k)}$ and $\gamma_k$ \cite[pp.~73--75, 89]{Ne:04},
  \begin{equation}
  \label{eqn:relation_1}
    y^{(k)} - x^{(k)} = \frac{\theta_k \gamma_k}{\gamma_{k+1}} (z^{(k)}-y^{(k)}) ,
  \end{equation}
  \begin{equation}
  \label{eqn:relation_2}
    \gamma_{k+1} = (1-\theta_k)\gamma_k + \theta_k \mu_k = \theta_k^2 L_k ,
    \qquad
    \gamma_{k+1} z^{(k+1)} = (1-\theta_k)\gamma_k z^{(k)} +
    \theta_k \mu_k y^{(k)} - \theta_k G_{L_k}(y^{(k)}) .
  \end{equation}
In addition we will make use of the relations
  \begin{eqnarray}
  \nonumber
    \frac{\gamma_{k+1}}{2} \| z^{(k+1)} - y^{(k)} \|_2^2 & = &
      \frac{1}{2\gamma_{k+1}} \Big( (1-\theta_k)^2 \gamma_k^2 \|z^{(k)} -y^{(k)}\|_2^2 \\
  \label{eqn:relation_4}
    & & - 2 \theta_k (1-\theta_k)\gamma_k G_{L_{k}}(y^{(k)})^T(z^{(k)}-y^{(k)})
    + \theta_k^2 \|G_{L_{k}}(y^{(k)})) \|_2^2 \Big) ,
  \end{eqnarray}
  \begin{equation}
  \label{eqn:relation_5}
    (1-\theta_k)\frac{\gamma_k}{2}-\frac{1}{2\gamma_{k+1}}(1-\theta_k)^2\gamma_k^2
    = \frac{(1-\theta_k) \gamma_k \theta_k \mu_k}{2\gamma_{k+1}} .
  \end{equation}
which originate from \eqref{eqn:relation_2}. We will also later need
the relation
  \begin{eqnarray}
  \nonumber
    \lefteqn{ (1-\theta_k) \frac{\gamma_k}{2} \| z^{(k)}-y^{(k)} \|_2^2
    - \frac{\gamma_{k+1}}{2} \| z^{(k+1)}-y^{(k)} \|_2^2 + \theta_k
    G_{L_k}(y^{(k)})^T(y^{(k)}-x^\star) } \\
  \nonumber
    & = & (1-\theta_k) \frac{\gamma_k}{2} \| z^{(k)}-y^{(k)} \|_2^2
    - \frac{\gamma_{k+1}}{2} \| z^{(k+1)}-y^{(k)} \|_2^2
    + \left( -\gamma_{k+1} z^{(k+1)} + (1-\theta_k) \gamma_k z^{(k)} +
    \theta_k \mu_k y^{(k)} \right)^T (y^{(k)}-x^\star) \\
  \nonumber
    & = & \left( (1-\theta_k) \frac{\gamma_k}{2} - \frac{\gamma_{k+1}}{2} +
    \theta_k \mu_k \right) (y^{(k)})^T y^{(k)} +
    (1-\theta_k) \frac{\gamma_k}{2} (z^{(k)})^T z^{(k)} - \frac{\gamma_{k+1}}{2}
    (z^{(k+1)})^T z^{(k+1)} \\
  \nonumber
    & & + \ \gamma_{k+1}(z^{(k+1)})^T x^\star - (1-\theta_k) \gamma_k (z^{(k)})^T x^\star
    - \theta_k \mu_k (y^{(k)})^T x^\star \\
  \nonumber
    & = & (1-\theta_k) \frac{\gamma_k}{2} \left( \| z^{(k)}-x^\star \|_2^2
    - (x^\star)^T x^\star \right) - \frac{\gamma_{k+1}}{2} \left( \| z^{(k+1)}-x^\star \|_2^2
    - (x^\star)^T x^\star \right) \\
  \nonumber
    & & + \ \frac{\theta_k \mu_k}{2} \left( \| y^{(k)}-x^\star \|_2^2 -
    (x^\star)^T x^\star \right) + \left(
    (1-\theta_k) \frac{\gamma_k}{2} - \frac{\gamma_{k+1}}{2} +
    \frac{\theta_k\mu_k}{2} \right) (y^{(k)})^T y^{(k)} \\
  \label{eqn:relation_6}
    & = & (1+\theta_k) \frac{\gamma_k}{2} \| z^{(k)}-x^\star \|_2^2
    - \frac{\gamma_{k+1}}{2} \| z^{(k)}-x^\star \|_2^2 +
    \theta_k \frac{\mu_k}{2} \| y^{(k)}-x^\star \|_2^2 ,
  \end{eqnarray}
where we again  used \eqref{eqn:relation_2}.
We can now start the analysis of the algorithm by considering the
inequality in Lemma \ref{lemma_gradient_map},
  \begin{equation}\label{eq:osc_ine1}
    (1-\theta_k) f(x^{(k+1)}) \leq (1-\theta_k) f(x^{(k)})
    + (1-\theta_k) G_{L_k}(y^{(k)})^T (y^{(k)}-x^{(k)})
    - (1-\theta_k) \frac{1}{2L_k} \| G_{L_k}(y^{(k)}) \|_2^2 ,
  \end{equation}
where we have omitted the strong convexity part, and the inequality
  \begin{equation}\label{eq:osc_ine2}
    \theta_k f(x^{(k+1)}) \leq \theta_k f(x^\star)
    + \theta_k G_{L_k}(y^{(k)})^T (y^{(k)}-x^\star)
    - \theta_k \frac{1}{2L_k} \| G_{L_k}(y^{(k)}) \|_2^2
    - \theta_k \frac{\mu_k^\star}{2} \| y^{(k)}-x^\star \|_2^2 .
  \end{equation}
Adding these bounds and continuing, we obtain
  \begin{eqnarray}
  \nonumber
    f(x^{(k+1)}) & \leq & (1-\theta_k) f(x^{(k)}) + (1-\theta_k) G_{L_k}(y^{(k)})^T
    (y^{(k)}-x^{(k)}) \\
  \nonumber
    & & + \ \theta_k f^\star + \theta_k G_{L_k}(y^{(k)})^T(y^{(k)}-x^\star) - \theta_k
    \frac{\mu_k^\star}{2} \| x^\star-y^{(k)} \|_2^2 - \frac{1}{2 L_k} \| G_{L_k}(y^{(k)}) \|_2^2 \\
  \nonumber
    & = & (1-\theta_k) f(x^{(k)}) + (1-\theta_k) \frac{\theta_k\gamma_k}{\gamma_{k+1}}
    G_{L_k}(y^{(k)})^T (z^{(k)}-y^{(k)}) \\
  \nonumber
    & & + \ \theta_k f^\star + \theta_k G_{L_k}(y^{(k)})^T (y^{(k)}-x^\star) - \theta_k
    \frac{\mu_k^\star}{2} \| x^\star-y^{(k)} \|_2^2 - \frac{1}{2 L_k} \| G_{L_k}(y^{(k)})\|_2^2 \\
  \nonumber
    & \leq & (1-\theta_k) f(x^{(k)}) + (1-\theta_k) \frac{\theta_k\gamma_k}{\gamma_{k+1}}
    G_{L_k}(y^{(k)})^T (z^{(k)}-y^{(k)}) \\
  \nonumber
    & & + \ \theta_k f^\star + \theta_k G_{L_k}(y^{(k)})^T (y^{(k)}-x^\star) - \theta_k
    \frac{\mu_k^\star}{2} \| x^\star - y^{(k)} \|_2^2 - \frac{1}{2 L_k} \|G_{L_k}(y^{(k)})\|_2^2 \\
  \nonumber
    & & + \ \frac{(1-\theta_k) \theta_k \gamma_k \mu_k}{2 \gamma_{k+1} }
    \| z^{(k)}-y^{(k)}\|_2^2 \\
  \nonumber
    & = & (1-\theta_k) f(x^{(k)}) + (1-\theta_k) \frac{\theta_k\gamma_k}{\gamma_{k+1}}
    G_{L_k}(y^{(k)})^T (z^{(k)}-y^{(k)}) \\
  \nonumber
    & & + \ \theta_k f^\star + \theta_k G_{L_k}(y^{(k)})^T (y^{(k)}-x^\star) - \theta_k
    \frac{\mu_k^\star}{2} \| x^\star-y^{(k)} \|_2^2 - \frac{1}{2 L_k} \|G_{L_k}(y^{(k)}) \|_2^2 \\
  \nonumber
    & & + \ \left( (1-\theta_k) \frac{\gamma_k}{2}-\frac{1}{2\gamma_{k+1}}
    (1-\theta_k)^2 \gamma_{k}^2 \right) \| z^{(k)}-y^{(k)} \|_2^2 \\
  \nonumber
    & = & (1-\theta_k) f(x^{(k)}) + (1-\theta_k) \frac{\gamma_k}{2} \| z^{(k)}-y^{(k)} \|_2^2
    - \frac{\gamma_{k+1}}{2} \| z^{(k+1)}-y^{(k)} \|_2^2 \\
  \nonumber
    & & + \ \theta_k f^\star + \theta_k G_{L_k}(y^{(k)})^T (y^{(k)}-x^\star)-\theta_k
    \frac{\mu_k^\star}{2} \| x^\star-y^{(k)} \|_2^2 \\
  \nonumber
    & = & (1-\theta_k) f(x^{(k)}) + \theta_k f^\star - \theta_k
    \frac{\mu_k^\star}{2} \| x^\star-y^{(k)} \|_2^2 \\
  \nonumber
    & & + \ (1-\theta_k) \frac{\gamma_k}{2} \| z^{(k)}-x^\star\|_2^2
    - \frac{\gamma_{k+1}}{2} \| z^{(k+1)}-x^\star \|_2^2 + \theta_k
    \frac{\mu_k}{2} \| y^{(k)}-x^\star \|_2^2 ,
\end{eqnarray}
where we have used \eqref{eqn:relation_1}, a trivial inequality, \eqref{eqn:relation_5}
, \eqref{eqn:relation_4}, \eqref{eqn:relation_2},
and \eqref{eqn:relation_6}. If $\mu_k\leq\mu_k^\star$ then
  \begin{equation}
    \label{eqn:per_iteration_relation}
    f(x^{(k+1)})-f^\star  + \frac{\gamma_{k+1}}{2} \| z^{(k+1)}-x^\star \|_2^2
    \leq \ (1-\theta_k) \left ( f(x^{(k)})-f^\star+ \frac{\gamma_k}{2} \|
    z^{(k)}-x^\star\|_2^2\right )
  \end{equation}
in which case we can combine the bounds to obtain
  \begin{equation}
    \label{eqn:convergence}
    f(x^{(k)})-f^\star  + \frac{\gamma_{k}}{2} \| z^{(k)}-x^\star \|_2^2
    \leq \ \left(\prod_{i=0}^{k-1}(1-\theta_i) \right)\left
    ( f(x^{(0)})-f^\star+ \frac{\gamma_k}{2} \| z^{(0)}-x^\star\|_2^2\right ),
  \end{equation}
where we have also used $x^{(0)}=y^{(0)}$ and \eqref{eqn:relation_1}
to obtain $x^{(0)}=z^{(0)}$.
For completeness, we will show why this is an optimal first-order method.
Let $\mu_k = \mu_k^\star=\mu$ and $L_k=L$.
If $\gamma_0 \geq \mu$ then using \eqref{eqn:relation_2} we obtain
$\gamma_{k+1} \geq \mu$ and $\theta_k \geq \sqrt{\mu/L}=\sqrt{Q^{-1}}$.
Simultaneously, we also have
$\prod_{i=0}^{k-1} (1-\theta_k)\leq \frac{4L}{\left
    (2\sqrt{L}+k\sqrt{\gamma_0}\right )^2}$ \cite[Lemma 2.2.4]{Ne:04},
and the bound is then
  \begin{equation}
  \label{eq:app_ocr}
    f(x^{(k)})-f^\star \leq \min \left ( \left( 1-\sqrt{Q^{-1}}
      \right)^{\!k}, \frac{4L}{\left (2\sqrt{L}+k\sqrt{\gamma_0}\right)^2}\right)
    \left( f(x^{(0)})-f^\star + \frac{\gamma_0}{2} \|x^{(0)}-x^\star\|_2^2 \right ) .
  \end{equation}
This is the optimal convergence rate for the class $\FzL$ and $\FmuL$
simultaneously \cite{Nemirovsky:1983,Ne:04}.

\section{Complexity Analysis} \label{sec:complexity}

In this Appendix we prove Theorem \ref{theo:upn}, \ie, we derive the complexity for reaching an
$\epsilon$-suboptimal solution for the algorithm $\algname{}$. The total worst-case complexity is given
by a) the complexity for the worst case number of restarts and b) the
worst-case complexity for a successful termination.

With a slight abuse of notation in this Appendix, $\mu_{k,r}$ denotes the
$k$th iterate in the $r$th restart stage, and similarly for
$L_{k,r}$, $\tilde L_{k,r}$, $x^{(k,r)}$, etc.
The value $\mu_{0,0}$ is the initial estimate of the strong convexity
parameter when no restart has occurred.
In the worst case, the heuristic choice in (\ref{eq:mu_k_estimate}) never
reduces $\mu_k$, such that we have $\mu_{k,r} = \mu_{0,r}$.
Then a total of $R$ restarts are required, where
  \begin{align*}
    \rho_{\mu}^R \, \mu_{0,0}=\mu_{0,R} \leq \mu \quad \Longleftrightarrow \quad
    R \geq \log (\mu_{0,0}/\mu)/\log (1/\rho_\mu) .
  \end{align*}
In the following analysis we shall make use of the relation
  \begin{align*}
    \exp \left( -\frac{n}{\delta^{-1}-1}\right )  \leq (1-\delta)^n\leq
    \exp\left (-\frac{n}{\delta^{-1}} \right), \qquad
    0<\delta< 1, \quad n\geq 0 \, .
  \end{align*}

\subsection{Termination Complexity}\label{app:termination_complexity}
After sufficiently many restarts (at most $R$), $\mu_{0,r}$ will be
sufficient small in which case
\eqref{eq:iteration_mu_sufficiently_small} holds and we obtain
  \begin{eqnarray*}
    \| G_{\tilde L_{k+1,r}}(x^{(k+1,r)}) \|_2^2 & \leq &
      \prod_{i=1}^k\left( 1-\sqrt{\frac{\mu_{i,r}}{L_{i,r}}} \right)
      \left( \frac{4\tilde L_{k+1,r}}{\mu_{k,r}} - \frac{2\tilde L_{k+1,r}}{2 L_{0,r}}
      + \frac{2\tilde L_{k+1,r}\gamma_{1,r}}{\mu_{k,r}^2} \right) \| G_{L_{0}}(x^{(0,r)})
      \|_2^2 \\
    & \leq & \left( 1-\sqrt{\frac{\mu_{k,r}}{L_{k,r}}} \right)^k
      \left( \frac{4\tilde L_{k+1,r}}{\mu_{k,r}} - \frac{2\tilde L_{k+1,r}}{2 L_{0,r}}
      + \frac{2\tilde L_{k+1,r}\gamma_{1,r}}{\mu_{k,r}^2} \right) \| G_{L_{0,r}}(x^{(0,r)})
      \|_2^2 \\
    & \leq & \exp\left( -\frac{k}{\sqrt{L_{k,r}/\mu_{k,r}}}\right) \left(
      \frac{4\tilde L_{k+1,r}}{\mu_{k,r}} - \frac{\tilde L_{k+1,r}}{L_{0,r}}
      + \frac{2\tilde L_{k+1,r}\gamma_{1,r}}{\mu_{k,r}^2} \right) \| G_{L_{0,r}}(x^{(0,r)})
      \|_2^2,
  \end{eqnarray*}
where we have used $L_{i,r}\leq L_{i+1,r}$ and $\mu_{i,r}\geq
\mu_{i+1,r}$. To guarantee $\| G_{\tilde L_{k+1,r}}(x^{(k+1,r)}) \|_2\leq \bar \epsilon$
we require the latter bound to be smaller than $\bar \epsilon^2$, i.e.,
\begin{align*}
  \| G_{\tilde L_{k+1,r}}(x^{(k+1,r)}) \|_2^2 \leq \exp\left( -\frac{k}{\sqrt{L_{k,r}/\mu_{k,r}}}\right) \left(
      \frac{4\tilde L_{k+1,r}}{\mu_{k,r}} - \frac{\tilde L_{k+1}}{L_{0,r}}
      + \frac{2\tilde L_{k+1,r}\gamma_{1,r}}{\mu_{k,r}^2} \right) \| G_{L_{0,r}}(x^{(0,r)})
      \|_2^2\leq \bar \epsilon^2 .
\end{align*}
Solving for $k$, we obtain
  \begin{equation}
  \label{eq:no_restart_complexity}
    k = \OC\bigl(\sqrt{Q}\log Q \bigr) + \OC\bigl(
    \sqrt{Q}\log \bar{\epsilon}^{-1} \bigr) ,
  \end{equation}
where we have used $\OC\bigl(\sqrt{L_{k,r}/\mu_{k,r}}\bigr) = \OC\bigl(\sqrt{\tilde{L}_{k+1,r}/\mu_{k,r}}\bigr)
= \OC\bigl(\sqrt{Q}\bigr)$.\\

\subsection{Restart Complexity}
How many iterations are needed before we can detect that a restart is needed?
The restart detection rule \eqref{eq:iteration_mu_sufficiently_small} gives
  \begin{eqnarray*}
    \| G_{\tilde L_{k+1,r}}(x^{(k+1,r)}) \|_2^2 & > &
      \prod_{i=1}^k\left( 1-\sqrt{\frac{\mu_{i,r}}{L_{i,r}}} \right)
      \left( \frac{4\tilde L_{k+1,r}}{\mu_{k,r}} - \frac{2\tilde L_{k+1,r}}{2 L_{0,r}}
      + \frac{2\tilde L_{k+1,r}\gamma_{1,r}}{\mu_{k,r}^2} \right) \| G_{L_{0,r}}(x^{(0,r)})
      \|_2^2 \\
     & \geq &  \left( 1-\sqrt{\frac{\mu_{1,r}}{L_{1,r}}} \right)^k
      \left( \frac{4\tilde L_{1,r}}{\mu_{1,r}} - \frac{2\tilde L_{1,r}}{2 L_{0,r}}
      + \frac{2\tilde L_{1,r}\gamma_{1,r}}{\mu_{1,r}^2} \right) \| G_{L_{0,r}}(x^{(0,r)})
      \|_2^2 \\
     & \geq &  \exp \left ( -\frac{k}{\sqrt{L_{1,r}/\mu_{1,r}}-1} \right)
      \left( \frac{4 L_{1,r}}{\mu_{1,r}} - \frac{2 L_{1,r}}{2 L_{0,r}}
      + \frac{2 L_{1,r}\gamma_{1,r}}{\mu_{1,r}^2} \right) \| G_{L_{0,r}}(x^{(0,r)})
      \|_2^2,
  \end{eqnarray*}
where we have used $L_{i,r}\leq L_{i+1,r}$, $L_{i,r}\leq \tilde L_{i+1,r}$ and
$\mu_{i,r}\geq \mu_{i+1,r}$. Solving for $k$, we obtain
  \begin{equation}
   \label{eq:restart_complexity_k_greater}
    k > \left (\sqrt{\frac{L_{1,r}}{\mu_{1,r}}} - 1 \right)
      \left( \log \left (\frac{4 L_{1,r}}{\mu_{1,r}} - \frac{ L_{1,r}}{L_{0,r}}+\frac{4 \gamma_{1,r} L_{1,r}}{\mu_{1,r}^2} \right)
      + \log \frac{\| G_{L_{0,r}}(x^{(0,r)}) \|_2^2}
        {\| G_{\tilde L_{k+1,r}}(x^{(k+1,r)}) \|_2^2} \right) .
  \end{equation}
Since we do not terminate but restart, we have
$\|G_{\tilde L_{k+1,r}}(x^{(k+1,r)}) \|_2 \geq \bar{\epsilon}$.
After $r$ restarts, in order to satisfy (\ref{eq:restart_complexity_k_greater})
we must have $k$ of the order
  \begin{align*}
    \OC\bigl( \sqrt{Q_r} \bigr) \, \OC(\log Q_r) + \OC \bigl( \sqrt{Q_r} \bigr)
    \, \OC(\log \bar{\epsilon}^{-1} ) ,
  \end{align*}
where
  \begin{equation*}
    Q_r=\OC \left (\frac{L_{1,r}}{\mu_{1,r}}\right )=\OC \left(\rho_\mu^{R-r} Q\right) .
  \end{equation*}
The worst-case number of iterations for running $R$ restarts is then given by
  \begin{eqnarray}
    \nonumber
      \lefteqn{ \sum_{r=0}^{R} \OC\left( \sqrt{Q \rho_\mu^{R-r}} \right) \,
      \OC(\log Q \rho_\mu^{R-r} ) + \OC\left( \sqrt{Q \rho_\mu^{R-r}} \right) \,
      \OC(\log \bar{\epsilon}^{-1} ) } \\
    \nonumber
    & = &\lefteqn{ \sum_{i=0}^{R} \OC\left( \sqrt{Q \rho_\mu^{i}} \right) \,
      \OC(\log Q \rho_\mu^{i} ) + \OC\left( \sqrt{Q \rho_\mu^{i}} \right) \,
      \OC(\log \bar{\epsilon}^{-1} ) } \\
  \nonumber
    & = & \OC\left( \sqrt{Q} \right) \left\{ \sum_{i=0}^{R}
    \OC\left( \sqrt{\rho_\mu^i}\right) \left [ \OC\left( \log Q \rho_\mu^i \right)
    + \OC\left( \log \bar{\epsilon}^{-1} \right) \right ]\right\} \\
  \nonumber
    & = & \OC\left( \sqrt{Q} \right) \left\{ \sum_{i=0}^{R}
    \OC\left( \sqrt{\rho_\mu^i}\right) \Big [ \OC\left( \log Q \right)
    +  \OC\left( \log
      \bar{\epsilon}^{-1} \right) \Big] \right\} \\
  \nonumber
    & = & \OC\left( \sqrt{Q} \right) \left\{ \OC(1) \Big [ \OC\left (\log Q \right)
    +  \OC\left( \log \bar{\epsilon}^{-1} \right) \Big]\right\} \\
  \nonumber
    & = & \OC\left( \sqrt{Q} \right) \OC\left( \log Q \right)
    + \OC\left( \sqrt{Q} \right) \, \OC\left( \log \bar{\epsilon}^{-1} \right) \\
  \label{eq:restart_complexity}
    & = & \OC\left( \sqrt{Q}\log Q \right) + \OC\left( \sqrt{Q}
    \log \bar{\epsilon}^{-1} \right),
  \end{eqnarray}
where we have used
  \begin{align*}
    \sum_{i=0}^{R} \OC \left( \sqrt{\rho_\mu^i} \right) =
    \sum_{i=0}^{R} \OC \left( \sqrt{\rho_\mu}^{\ i} \right) =
      \OC \left( \frac{1-\sqrt{\rho_\mu^{R+1}}}{1-\sqrt{\rho_\mu}}
      \right) = \OC(1) .
  \end{align*}

\subsection{Total Complexity}\label{sec:total_complexity}
The total iteration complexity of \algname{} is given by
\eqref{eq:restart_complexity} plus \eqref{eq:no_restart_complexity}:
  \begin{align}
    \OC\bigl(\sqrt{Q}\log Q \bigr) + \OC\bigl(\sqrt{Q}\log
    \bar{\epsilon}^{-1} \bigr) .
  \end{align}
It is common to write the iteration complexity in terms of reaching an
$\epsilon$-suboptimal solution satisfying $f(x)-f^\star\leq \epsilon$.
This is different from the stopping criteria
$\|G_{\tilde L_{k+1,r}}(x^{(k+1,r)}) \|_2\leq\bar \epsilon$ or
$\|G_{L_{k,r}}(y^{(k,r)}) \|_2\leq\bar \epsilon$ used in
the \algname{} algorithm.
Consequently, we will derive a relation between $\epsilon$ and $\bar \epsilon$.
Using Lemmas \ref{lemma_gradient_map} and 
\ref{lemma_gradient_map_sup3}, in case we stop using
$\|G_{L_{k,r}}(y^{(k,r)}) \|_2\leq\bar \epsilon$ we obtain
  \begin{eqnarray*}
    f\bigl(x^{(k+1,r)}\bigr) - f^\star \leq \left( \frac{2}{\mu}-\frac{1}{2 L_{k,r}}
    \right) \|G_{L_{k,r}}(y^{(k,r)}) \|_2^2 \leq \frac{2}{\mu}
    \|G_{L_{k,r}}(y^{(k,r)}) \|_2^2 \leq \frac{2}{\mu} \bar{\epsilon}^2 ,
  \end{eqnarray*}
and in case we stop using
$\|G_{\tilde L_{k+1,r}}(x^{(k+1,r)}) \|_2\leq\bar \epsilon$, we obtain
  \begin{eqnarray*}
    f\bigl(\tilde x^{(k+1,r)}\bigr) - f^\star \leq \left(
    \frac{2}{\mu} - \frac{1}{2 \tilde L_{k+1,r}}
    \right) \|G_{\tilde L_{k+1,r}}(x^{(k+1,r)}) \|_2^2 \leq \frac{2}{\mu}
    \|G_{\tilde L_{k+1,r}}(x^{(k+1,r)}) \|_2^2 \leq \frac{2}{\mu}
    \bar{\epsilon}^2 .
  \end{eqnarray*}
To return with either $f(\tilde x^{(k+1,r)})-f^\star\leq \epsilon$ or
$f(x^{(k+1,r)})-f^\star\leq \epsilon$ we require the latter bounds to hold and
thus select $(2/\mu) \, \bar{\epsilon}^2=\epsilon$.
The iteration complexity of the algorithm in terms of $\epsilon$ is then
\begin{eqnarray*}
    \OC\Bigl( \sqrt{Q}\log Q \Bigr) + \OC\left(
    \sqrt{Q}\log \left( (\mu\epsilon)^{-1} \right) \right )
    &=&\OC\Bigl( \sqrt{Q}\log Q \Bigr) + \OC\Bigl( \sqrt{Q}\log \mu^{-1}
    \Bigr) + \OC\left(\sqrt{Q}\log  \epsilon^{-1}  \right ) \\
    &=&\OC\Bigl( \sqrt{Q}\log Q \Bigr) + \OC\left(\sqrt{Q}\log \epsilon^{-1}  \right ),
  \end{eqnarray*}
where we have used $\OC\left(  1/\mu\right)=\OC\left(  L/\mu\right)=\OC\left(Q\right)$.

\bibliography{refs}
\bibliographystyle{spmpsci}

\end{document}